\title[HDG eigenproblem \& hybridization]%
{Spectral approximations by the HDG method}
\thanks{{\LL This work was partially supported by NSF through grants
    DMS-1211635, DMS-1318916, and the CAREER award DMS-0847241, by an
    Alfred P. Sloan Research Fellowship, and by AFOSR under grant
    FA9550-12-0357.}}
\author{J. Gopalakrishnan}
\address{Portland State University, PO Box 751, Portland,
OR 97207-0751}
\email{gjay@pdx.edu}
\author{F. Li}
\address{Department of Mathematical Sciences,
  Rensselaer Polytechnic Institute, Troy,
  NY~12180.}
\email{lif@rpi.edu}
\author{N.-C. Nguyen}
\address{Department of Aeronautics and Astronautics,
  Massachusetts Institute of Technology, Cambridge, MA02139, USA.}
\email{cuongng@mit.edu}
\author{J. Peraire}
\address{Department of Aeronautics and Astronautics,
  Massachusetts Institute of Technology, Cambridge, MA02139, USA.}
\email{peraire@mit.edu}
\newcommand{\grad}{\vec{\nabla}}
\newcommand{\Shk}{S_h^{(\kappa)}}
\newcommand{\Shl}{S_h^{(\lambda_h)}}
\theoremstyle{plain}
\newtheorem{theorem}{Theorem}[section]
\newtheorem{lemma}[theorem]{Lemma}
\newtheorem{corollary}[theorem]{Corollary}
\theoremstyle{remark}
\definecolor{bbchange}{rgb}{0,0,0}
\definecolor{ccchange}{rgb}{0.8,0,0}
\newcommand\bbchange[1]{{#1}}
\def\d{\partial}
\newcommand{\Om}{\ensuremath{\Omega}}
\newcommand{\et}{\ensuremath{\tilde{\eta}}}
\newcommand{\lt}{\ensuremath{\tilde{\lambda}}}
\newcommand{\vG}{\varGamma}
\newcommand{\vqg}{\vq^{\;g}}
\newcommand{\vqf}{\vq^{\,f}}
\newcommand{\diam}{\ensuremath{\mathop\mathrm{diam}}}
\newcommand{\dist}{\ensuremath{\mathop\mathrm{dist}}}
\newcommand{\Poincare}{Poincar{\'{e}}}
\newcommand{\Hdiv}[1]{H(\mathrm{div},{#1})}
\newcommand{\qhat}{\widehat{q}}
\newcommand{\Th}{{\mathscr{T}_h}}
\newcommand{\oh}{{\Th}}
\newcommand{\doh}{{\partial \!\Th}}
\newcommand{\taum}{\tau_K^{\mathrm{max}}}
\newcommand{\ip}[1]{\langle {#1} \rangle}
\newcommand{\ii}{\ensuremath{\hat \i}}
\newcommand{\pr}{\varPi}
\newcommand{\pv}{\varPi_h^{\scriptscriptstyle{V}}}
\newcommand{\pw}{\varPi_h^{\scriptscriptstyle{W}}}
\newcommand{\Uw}{\mathcal{U}_{\!\scriptscriptstyle{W}}}
\newcommand{\Qw}{\vec{\mathcal{Q}}_{\!\scriptscriptstyle{W}}}
\newcommand{\vq}{{\vec{q}}}
\newcommand{\vr}{{\vec{r}}}
\newcommand{\vn}{{\vec{n}}}
\newcommand{\vv}{{\vec{v}}}
\newcommand{\vx}{{\vec{x}}}
\newcommand{\vw}{{\vec{w}}}
\newcommand{\dive}{\mathop{\nabla}\cdot\,}
\newcommand{\Forall}{\quad\text{for all }}
\newcommand{\RRR}{\mathbb{R}}
\newcommand{\LU}{\mathcal{U}}
\newcommand{\QQ}{\vec{\mathcal{Q}}}
\newcommand{\UU}{\mathcal{U}}
\newcommand{\LL}{}
\begin{document}

\begin{abstract}
  We consider the numerical approximation of the spectrum of a
  second-order elliptic eigenvalue problem by the hybridizable
  discontinuous Galerkin (HDG) method. We show for problems with
  smooth eigenfunctions that the approximate eigenvalues and
  eigenfunctions converge at the rate $2k+1$
  and $k+1$, respectively. Here $k$ is the degree of the polynomials
  used to approximate the solution, its flux, and the numerical
  traces. Our numerical studies show that a Rayleigh quotient-like
  formula applied to certain locally postprocessed approximations can
  yield eigenvalues that converge faster at the rate $2k + 2$ for 
  the HDG method as well as for 
  the Brezzi-Douglas-Marini (BDM) method. We also
  derive and study a condensed nonlinear eigenproblem for the
  numerical traces obtained by eliminating all the other variables.
\end{abstract}

\keywords{HDG, nonlinear, eigenvalue, eigenfunction, BDM,
  postprocessing, condensation, hybridization, pollution,
  perturbation}

\maketitle

\section{Introduction}         \label{sec:intro}

We study the HDG (hybridized discontinuous Galerkin) approximation
to the following eigenproblem: Find eigenvalues $\lambda$ in $\RRR$
and corresponding nontrivial eigenfunctions $u$ satisfying
\begin{equation}
  \label{eq:lambda}
-\nabla\cdot (\alpha\grad u) = \lambda\,u\quad\mbox{ in }\Omega,
\qquad u=0\quad\mbox{ on }\partial\Omega.
\end{equation}
{\LL Assumptions are placed on $\alpha$ and $\Omega$ in Section~\ref{sec:hdg}.}
Several HDG discretizations were introduced
in~\cite{CockbGopalLazar09} to discretize corresponding source
problems.  The purpose of this paper is to study the application of
one such method to eigenproblems. The particular HDG method considered
here (referred to as the H-LDG method in~\cite{CockbGopalLazar09}, but
simply as the ``HDG method" in this paper) is chosen for our study
because we now have a fairly complete theoretical understanding of its
application to the source problem.

Two well-known advantages of the HDG method, when applied to source
problems, are reduced system size and flexible stabilization.  The
latter arises due to a transparent stabilization mechanism in the
definition of numerical flux. It allows one to use, for example,
polynomials of the same degree $k$ to approximate the solution $u$ and
components of its flux~$\vq=-\alpha \grad u$. While use of these
spaces would have resulted in an unstable mixed method, the resulting
HDG method is stable, and provides optimal order approximations for
all variables. To discuss the former, namely the advantage of reduced
system size, recall the process of static condensation, which, for
source problems, removes all interior variables to yield a
``condensed'' system for inter-element variables.  The HDG condensed
system, when compared to condensed systems from other DG methods, is
attractive because of its smaller size and favorable sparsity
patterns. One of the questions we address in this paper is whether
such condensed systems are useful in eigenproblems. Note that the
condensation process, by reducing the system size, also reduces the
size of the spectrum.  How much of the spectrum can be approximated
despite this reduction is a question answered in
Section~\ref{sec:hybrid}. There, we also derive the nonlinear equation
that needs to be solved in the reduced dimensions to compute the
spectral approximations.

Apart from these results on the condensed eigenproblem, we prove
convergence results for the HDG eigenproblem in
Sections~\ref{sec:convergence} and~\ref{sec:rate}.  We show that the
approximate eigenvalues obtained by the HDG method exhibit no spectral
pollution. They converge to the exact eigenvalues at the rate
$O(h^{2k+1})$, under suitable regularity assumptions, if we use
polynomials of degree at most $k$ for all the HDG variables.  We also
show, under similar assumptions, that the gap between the
corresponding discrete eigenspace and the exact eigenspace in
$L^2(\Omega)$ converges at $O(h^{k+1})$.  Roughly speaking, this
shows that the rate of convergence of eigenfunctions in
$L^2(\Omega)$ is optimal for the HDG method.

These results compare favorably with many other DG eigenvalue
approximations~\cite{AntonBuffaPerug06,GianiHall2012}. The unified
presentation of~\cite{AntonBuffaPerug06} facilitates comparison.  They
show that many traditional Hermitian DG methods approximate
eigenvalues at the rate $O(h^{2k})$.  For non-Hermitian DG methods,
they find that the eigenvalue convergence rate is even poorer, in
general no better than $O(h^k)$.  The HDG method, which
can be considered to fall in the Hermitian class, yields
eigenvalues that converge faster,
when compared to both the Hermitian and the non-Hermitian DG methods considered
in~\cite{AntonBuffaPerug06}.
However, let us note that the convergence rate of HDG eigenvalues (or,
for that matter, any DG eigenvalues) do not compare favorably with the
$O(h^{2k+2})$ convergence rate of the mixed hybridized Raviart-Thomas
(HRT) method~\cite{BoffiBrezzGasta00,hrt:2010,MerciOsborRappa81}.  Our
analysis also points to other differences when comparing the HDG and
HRT eigenproblems. For example, comparing
Theorem~\ref{thm:reduction:eig} below with
\cite[Theorem~3.2]{hrt:2010}, we note that the extent of the spectrum
recovered by the condensed system may be shorter (up to $O(1/h)$) for
the HDG method in comparison to the HRT method (which is up to
$O(1/h^2)$).

The method of convergence analysis in this paper is motivated by the
many early works that developed abstract approaches to analyze
approximation of eigenproblems~\cite{BrambOsbor73,
  DesclNassiRappa78a,MerciOsborRappa81,Osbor75}, and in particular,
the application of the abstract theory to DG methods
in~\cite{AntonBuffaPerug06}. The critical new tool that helps push the
analysis forward in the HDG case, and yield better convergence rates
than~\cite{AntonBuffaPerug06}, is the projection operator
of~\cite{CockbGopalSayas10}. The projection allows our analysis of HDG
eigenvalue errors to proceed along the lines of similar analyses for
mixed methods~\cite{BoffiBrezzGasta00}. A few important differences
arise due to the fact that the HDG projection possesses only a weak
analogue of a well-known commutativity property. The analysis in the second
aspect of this study, involving the condensed system, is motivated by
our previous such analysis~\cite{hrt:2010} for the HRT method.

Reports of extensive numerical experiments are provided in
Section~\ref{sec:numer}. Of particular interest is a local and
inexpensive postprocessing presented there. The postprocessed
eigenvalues seem to converge at $O(h^{2k+2})$-rate thus making the HDG
method competitive with the mixed method.  The intuition behind the
construction of this postprocessing is inherited from our previous
experience with the mixed method~\cite{hrt:2010}, where we proved that
the postprocessed eigenfunctions have better convergence
rates. However, the argument used there to provide a rigorous proof
does not seem to extend to the HDG method.

In the next section, we introduce the HDG eigenproblem and essential
notations used throughout the paper. In Section~\ref{sec:convergence},
we show that there is no pollution of the spectrum when approximating
it by the HDG method. This result is improved in
Section~\ref{sec:rate}, where we establish (in
Theorem~\ref{thm:ewrate}) the convergence rate of HDG
eigenvalues. Section~\ref{sec:hybrid} investigates the condensation
(or hybridization) of the HDG eigenproblem to obtain a smaller condensed
nonlinear eigenproblem for the interface variable otherwise known as
the numerical trace. The condensed nonlinear eigenproblem is given in
Theorem~\ref{thm:reduction:eig} and a closely related linear
eigenproblem is investigated in Theorem~\ref{thm:init}.
{\LL
We conclude
in Section~\ref{sec:numer} with the results of our numerical studies
and a brief discussion of the eigenproblem using
 mixed-degree
polynomial spaces.
}

\section{The HDG source and eigenvalue problems}  \label{sec:hdg}

Consider the Dirichlet boundary value problem (rewritten as a first
order system) of finding $\vqf \in \Hdiv\Omega$ and $u^f\in
L^2(\Omega)$, given any ``source'' $f$ in $L^2(\Omega)$, such that
\begin{subequations}
\label{eq:theproblem}
\begin{align}
\label{eq:q+grad(u)=0}
\vqf + \alpha\, \grad u^f
                        & =0      && \quad \text { on } \Omega,\\
\label{eq:div(q)+du=f}
\dive\, \vqf             & = f     && \quad \text { on } \Omega,\\
\label{eq:u=g}
u\bbchange{^f}                       & = 0     && \quad \text{ on } \d \Omega.
\end{align}
\end{subequations}
All functions, unless explicitly stated otherwise, are real-valued in
this paper. Throughout, $\Omega \subset \RRR^n$ is a polyhedral domain ($n
\ge 2$), $\alpha:\Omega \to \RRR^{n\times n}$ denotes a variable
matrix valued coefficient, which we assume to be symmetric and
positive definite at all points in $\Omega$. {\LL We assume that there is a fixed constant that bounds the norms of
$\alpha$ and $c=\alpha^{-1}$ for all $x \in\Omega$, dependence
on which is not tracked in the estimates of this paper. }
To facilitate \bbchange{our 
analysis}, we introduce notation for the ``solution operator'' $T:
L^2(\Omega)\to L^2(\Omega)$, which is defined simply by
\begin{equation}
  \label{eq:T}
T f = u^f.
\end{equation}
It is well known that $T$ is compact and self-adjoint. Its spectrum,
denoted by $\sigma(T)$, consists of isolated points on the positive
real line accumulating at zero.  Clearly, there is a one to one
correspondence between the eigenvalues of~\eqref{eq:lambda} and those
of $T$. Indeed, $\mu$ is an eigenvalue of $T$ if and only if $\mu =
1/\lambda$ for some $\lambda$ satisfying~\eqref{eq:lambda}.

\subsection{The source problem}   \label{ssec:source}

The HDG method provides an approximation $T_h$ to $T$. To understand
this approximation, we first describe the HDG source problem and
introduce known results we shall use later. Afterward, we will present
the HDG eigenvalue problem.

The HDG method defines a scalar approximation $u_h$ to $u$ and a
vector approximation $\vq_h$ to $\vq$\, in the following spaces,
respectively:
\begin{align}
  \label{eq:W}
  W_h &= \{ w: \text{ for every mesh element }K, w|_K \in P_k(K)\},
  \\  \label{eq:V}
  V_h &=  \{ \vv: \text{ for every mesh element }K, \vv|_K \in P_k(K)^n\}.
\end{align}
Note that functions in these spaces need not be continuous across
element interfaces.  Above and elsewhere, we use $P_k(D)$ to denote
the space of polynomials of degree at most $k\ge 0 $ on some
domain~$D$. The subscript $h$ denotes the mesh size defined as the
maximum of the diameters of all mesh elements.

For any (scalar or vector) function $q$ in $V_h$ or $W_h$, the trace
$q|_F$ is, in general, a double-valued function on any interior mesh
face $F = \d K^+ \cap \d K^-$ shared by the mesh elements $K^+$ and
$K^-$. Its two branches, denoted by $[q]_{K^+}$ and $[q]_{K^-}$, are
defined by $[q]_{K^\pm}(\vec{x})=
\lim_{\epsilon\downarrow0}\,q(\vec{x}-\epsilon\,[\vec{n}]_{K^\pm})$ for
all $\vx$ in $F$. Here and elsewhere, $\vn$ denotes the double-valued
function of unit normals on the element interfaces: on a face
$F\subseteq \d K$, its branch $[\vn]_K$ equals the unit  normal
on $\d K$ pointing outward from~$K$. For functions $u$ and $v$ in $L^2(D)$, we write $(u,v)_D =
\int_D u v \; dx$ whenever $D$ is a domain of $\RRR^n$, and
$\ip{u,v}_D = \int_D u v \; dx$ whenever $D$ is a domain of
$\RRR^{n-1}$. To simplify the notation,  define
\[
(v,w)_{\oh}=\sum_{K \in \Th} (v,w)_K
\qquad\text{and}\qquad
\ip{v,w}_{\doh} =
\sum_{K \in \Th}
\ip{ v,w}_{\d K},
\]
where in the latter, we understand that for double-valued $v$ and $w$,
the integral $\ip{v,w}_{\d K}$ is computed using the branches $[v]_K$
and $[w]_K$ from $K$.
For vector functions $\vv$ and $\vw$, the notations are similarly
defined with the integrand being the dot product $\vv \cdot \vw$.

In addition to the spaces  $V_h$ and $W_h$ introduced above, our method also
uses one other discrete space~$M_h$, consisting of functions defined
on the mesh faces (or mesh edges  if $n=2$), namely
\begin{equation}
  \label{eq:M}
  M_h = \{ \mu: \text{ for every mesh face } F, \;\mu|_F \in
  P_k(F), \text{ and if } F\subseteq \d\Omega, \; \mu|_F=0\}.
\end{equation}
The HDG method defines the approximate solution $u_h^f$, the
approximate flux $\vq_h^f$,
and the numerical trace
$\eta_h^f$, as the functions in $W_h$, $V_h,$ and $M_h$,
respectively,
satisfying
\begin{subequations}
\label{method}
\begin{alignat}{2}
\label{eq:method-a}
 ( c\, \vq_h^f,\vr)_{\oh} -
 ( u_h^f,\dive \vr)_{\oh} +   \ip{\eta_h^f,\vr\cdot\vn}_{\doh}
 &  = 0,
 &&\Forall \vr\in V_h,
 \\
\label{eq:method-b}
 -( \vq_h^f, \grad w)_{\oh} +
 \ip{ \qhat_h^f\cdot\vn, w}_{\doh}
 & =
 (f, w)_{\oh}
 &&
 \Forall w \in W_h,\\
\label{eq:method-c}
 \ip{\mu, \qhat_h^f \cdot \vn }_{\doh}
   & = 0
   && \Forall \mu \in M_h,
\end{alignat}
\end{subequations}
where
$\qhat_h^f$ is a  double-valued vector function on mesh
interfaces defined by
\begin{equation}
  \label{eq:qhatf}
\qhat_h^f = \vq_h^f + \tau \big( u_h^f - \eta_h^f \big) \vn.
\end{equation}
Note that this defines all branches, i.e., on the boundary $\d K$ of
every mesh element $K$, the value of the branch of $\qhat_h^f$ from
$K$ is $[\qhat_h^f]_K = [\vq_h^f]_K + [\tau]_K \big( [u_h^f]_K -
\eta_h^f \big)[ \vn]_K.  $ Here $\tau$ is a non-negative penalty
function, which is also double-valued on the element interfaces and
$[\tau]_K$ above denotes the branch of $\tau$-values from~$K$. For
simplicity, we assume that any branch of $\tau$ is a constant function
on each mesh edge. It is proved in~\cite{CockbGopalLazar09} that the
system~\eqref{method} is uniquely solvable if {\LL $[\tau]_K$ }
is positive on
at least one face of~$K$ for every element~$K$.  This unique
solvability result is assumed throughout this paper. Given any $f$
in $L^2(\Omega)$, the component $u_h^f$ of the unique solution
of~\eqref{method} is used to define the discrete version of the operator~$T$
in~\eqref{eq:T}, namely
\begin{equation}
  \label{eq:Th}
  T_h f = u_h^f.
\end{equation}

We will need a projection $\pr_h(\vq,u)$, into the product space $V_h
\times W_h$, originally designed in~\cite{CockbGopalSayas10}. Its
domain is a subspace of $\Hdiv\Omega \times L^2(\Omega)$ consisting of
sufficiently regular functions, e.g., $\Hdiv\Omega \cap H^s(\Omega)^n
\times H^s(\Omega)$ for $s>1/2.$ When its components need to be
identified, we also write $ \pr_h (\vq, u)$ as $ (\pv \vq, \pw u) $
where $\pv \vq$ and $\pw u$ are the components of the projection in
$V_h$ and $W_h$, respectively. (Despite this notation, note that $\pv
\vq$ depends not just on $\vq$, but rather on both $\vq$ and $u$. The
same applies for $\pw u$.)  The components are defined by
\begin{subequations}
  \label{eq:proj}
  \begin{align}
    \label{eq:proj1}
    (\pv \vq, \vr)_K
    &= ( \vq, \vr)_K
    && \Forall \vr \in P_{k-1}(K)^n,
    \\
    \label{eq:proj2}
    (\pw u, w )_K
    &= ( u, w )_K
    && \Forall w \in P_{k-1}(K),
    \\
    \label{eq:proj3}
    \ip{\pv \vq \cdot \vn + \tau \pw u,\mu }_F
    &= \ip{\vq\cdot\vn + \tau  u,\mu }_F
    && \Forall \mu \in P_k(F),
  \end{align}
\end{subequations}
for all faces $F$ of the simplex $K$.  
Let $s_u, s_q \in (1/2,k+1]$. We recall the following approximation
property, proved in~\cite[Theorem~2.1]{CockbGopalSayas10} for integer
values of $s_u, s_q$, and extended to remaining values of $s_u, s_q \in
(1/2,k+1]$ in~\cite{CockbDuboiGopal10}:

For all $( \vq, u) \in \Hdiv\Omega \cap H^{s_q}(\Omega)^n
\times H^{s_u}(\Omega)$,
  \begin{subequations}
      \label{eq:PIapprx}
    \begin{alignat}{1}
      \label{eq:PIapprx-a}
      \| \pv \vq - \vq\, \|_{\LL L^2(K)}\le
      &\;C\,h_K^{s_q}\,|\vq|_{H^{s_q}(K)}
       +C\, h_K^{s_u}\,{\tau_K^{*}}\,|u|_{H^{s_u}(K)}
      \\
      \label{eq:PIapprx-b}
      \| \pw  u - u \|_{\LL L^2(K)} \le
      &\;C\,h^{s_u}_K\,|u|_{H^{s_u}(K)}
      +C\,\frac{h^{s_q}_K}{\tau_K^{\max}}\,|\vq|_{H^{s_q}(K)}.
    \end{alignat}
  \end{subequations}
  {\LL Above and throughout we use $C$ to denote a generic constant
    independent of the mesh element sizes and the stabilization
    parameter $\tau$. The notations appearing above are defined as
    follows, letting $F_{\max}$ denote the face of $K$ where
    $\tau|_{\d K}$ is maximum:}
  \[
  {\LL \tau_K^{\max}=\max \tau|_{\partial K},
  \quad
  \tau_K^{*}=\max \tau|_{\d K\setminus F_{\max}},
  \quad
  h_K = \diam(K),
  \quad
  h =
  \max_{K \in Th} h_K.}
  \]
  The following error estimate is
  known~\cite{CockbDuboiGopal10,CockbGopalSayas10}.

\begin{theorem}[see~\cite{CockbDuboiGopal10}]
  \label{thm:srccgce}
  Let the exact solution $(\vqf,u^f)$ of~\eqref{eq:theproblem} be in
  $\Hdiv\Omega \cap H^s(\Omega)^n \times H^s(\Omega)$ for some
  $s>1/2$.  Then,
  \begin{align}
    \label{eq:ee-u}
    \| u -  u_h^f \|_\oh & \le  C \| u - \pw u \|_\oh + b_\tau
    C \| \vq - \pv \vq \,\|_c,
    \\    \label{eq:ee-q}
    \| \vq - \vq_h^f \|_c & \le 2 \| \vq - \pv \vq \,\|_c,
  \end{align}
  where $\displaystyle{b_\tau = \max\{ 1 + h_K {\tau_K^*} +
      h_K/\taum: K\in\Th\}}$,  and
    {\LL $\|\vq\,\|_c=(c\,\vq,\vq\,)^{1/2}_{\doh}$ with $c=\alpha^{-1}$.}
\end{theorem}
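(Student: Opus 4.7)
The plan is to follow the projection-based framework of~\cite{CockbGopalSayas10, CockbDuboiGopal10}, measuring the HDG error against the tailor-made projection $\pr_h$ of~\eqref{eq:proj} rather than against the exact solution.  Set $\eq=\pv\vq^f-\vq_h^f$, $\eu=\pw u^f-u_h^f$, and $\el=P_M u^f-\eta_h^f$, where $P_M$ denotes the face-wise $L^2$-orthogonal projection into $P_k(F)$ (which places $\el$ in $M_h$, since $u^f|_{\d\Omega}=0$).  The exact solution $(\vq^f,u^f)$ satisfies the HDG identities~\eqref{method} if one replaces $(\vq_h^f,u_h^f,\eta_h^f)$ there by $(\vq^f,u^f,u^f|_{\text{faces}})$.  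Subtracting these identities from~\eqref{method} and then invoking~\eqref{eq:proj1}--\eqref{eq:proj3} together with the face-wise constancy of $\tau$ to annihilate the projection-orthogonal parts of $\vq^f-\pv\vq^f$ and $u^f-\pw u^f$ against test functions in $P_{k-1}(K)^n$, $P_{k-1}(K)$, and $P_k(F)$, respectively, yields the error equations
\begin{align*}
(c\eq,\vr)_\oh-(\eu,\dive\vr)_\oh+\ip{\el,\vr\cdot\vn}_\doh
&=-(c(\vq^f-\pv\vq^f),\vr)_\oh,\\
-(\eq,\grad w)_\oh+\ip{\eq\cdot\vn+\tau(\eu-\el),w}_\doh &=0,\\
\ip{\mu,\,\eq\cdot\vn+\tau(\eu-\el)}_\doh &=0,
\end{align*}
for all $(\vr,w,\mu)\in V_h\times W_h\times M_h$.

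The flux bound~\eqref{eq:ee-q} drops out of a short energy argument. Choosing $(\vr,w,\mu)=(\eq,\eu,\el)$, integrating the second equation by parts elementwise, and combining the three equations with signs $+,+,-$ telescopes the boundary contributions into $\ip{\tau(\eu-\el),\eu-\el}_\doh$ and yields the identity
\[
\|\eq\|_c^2+\ip{\tau(\eu-\el),\eu-\el}_\doh=-(c(\vq^f-\pv\vq^f),\eq)_\oh.
\]
Cauchy--Schwarz and Young's inequality then give $\|\eq\|_c\le\|\vq^f-\pv\vq^f\|_c$, and~\eqref{eq:ee-q} follows by the triangle inequality.

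For the scalar estimate~\eqref{eq:ee-u} I would carry out a duality (Aubin--Nitsche) argument. Let $\Phi$ solve the adjoint problem $-\dive(\alpha\grad\Phi)=\eu$ in $\Omega$ with $\Phi|_{\d\Omega}=0$, and set $\Psi=-\alpha\grad\Phi$, so that elliptic regularity yields $\|\Phi\|_{H^2}+\|\Psi\|_{H^1}\le C\|\eu\|_\oh$. Starting from $\|\eu\|_\oh^2=(\dive\Psi,\eu)_\oh$, elementwise integration by parts produces
\[
\|\eu\|_\oh^2=-(\Psi,\grad\eu)_\oh+\ip{\Psi\cdot\vn,\,\eu-\el}_\doh,
\]
where $\el$ may be inserted at no cost because $\Psi\cdot\vn$ is single-valued on interior faces and $\el|_{\d\Omega}=0$. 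Testing the three error equations against $(\pv\Psi,\pw\Phi,P_M\Phi)$ and combining with this identity rewrites $\|\eu\|_\oh^2$ as a bilinear pairing between the primal projection errors $\vq^f-\pv\vq^f,\,u^f-\pw u^f$ and the dual projection errors $\Psi-\pv\Psi,\,\Phi-\pw\Phi$. Applying~\eqref{eq:PIapprx} to both, together with the regularity bound on $(\Phi,\Psi)$, then gives $\|\eu\|_\oh\le C(\|u^f-\pw u^f\|_\oh+b_\tau\|\vq^f-\pv\vq^f\|_c)$, whence~\eqref{eq:ee-u} follows by one more triangle inequality. The main obstacle is the bookkeeping in this duality step: $\tau$ enters the HDG scheme and the definition of $\pr_h$ simultaneously, all boundary terms must cancel exactly, and the $\tau_K^*$ and $h_K/\taum$ factors in~\eqref{eq:PIapprx-a} applied to the dual flux $\Psi$ are precisely what produce the stated constant $b_\tau$ rather than a larger $\tau$-dependent constant.
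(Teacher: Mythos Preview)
The paper does not prove Theorem~\ref{thm:srccgce} at all; it is simply quoted from~\cite{CockbDuboiGopal10,CockbGopalSayas10}. So the comparison is between your sketch and the argument in those references.

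Your derivation of the error equations and the energy argument for~\eqref{eq:ee-q} are correct and match the projection-based analysis in~\cite{CockbGopalSayas10}: the cancellations you describe do occur and yield $\|\eq\|_c\le\|\vq^f-\pv\vq^f\|_c$, hence~\eqref{eq:ee-q} by the triangle inequality.

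There is, however, a genuine gap in your treatment of~\eqref{eq:ee-u}. Your Aubin--Nitsche step invokes ``elliptic regularity yields $\|\Phi\|_{H^2}+\|\Psi\|_{H^1}\le C\|\eu\|$,'' but Theorem~\ref{thm:srccgce} makes \emph{no} regularity assumption on the adjoint problem --- the only hypothesis is that the primal solution lies in $H^s$ for some $s>1/2$. On a nonconvex polyhedral $\Omega$ (e.g., the L-shaped domain used in \S\ref{sec:numer}) the dual $\Phi$ is generically only in $H^{1+\varepsilon}$ for small $\varepsilon$, and the $H^2$ bound you need simply fails. A duality argument of the type in~\cite[Lemma~4.1]{CockbGopalSayas10} does exist, but it produces a \emph{superconvergence} estimate for $\|\pw u-u_h^f\|$ that carries an extra power of $h$ and an implicit dependence on the domain's regularity index; it is a different statement from~\eqref{eq:ee-u}. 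The estimate~\eqref{eq:ee-u} as written, with the purely local constant $b_\tau=\max_K\{1+h_K\tau_K^*+h_K/\taum\}$ and no hidden regularity, is obtained in~\cite{CockbDuboiGopal10} by an element-by-element argument that bounds $\|\eu\|_{L^2(K)}$ directly from the error equations using local inverse/trace inequalities and the stabilization term $\ip{\tau(\eu-\el),\eu-\el}_{\partial K}$ already controlled by the energy identity --- this is where the specific combination $h_K\tau_K^*+h_K/\taum$ comes from, not from projection estimates on a dual solution.
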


\subsection{The eigenproblem}           \label{ssec:eigenproblem}

The HDG discretization of the eigenproblem~\eqref{eq:lambda} defines
an approximation to the eigenfunction $u_h\in W_h$,
an approximation to the eigenflux $\vq_h\in V_h$, and an
approximation to the eigenfunction trace
$\eta_h\in M_h$, as a nontrivial set of functions
satisfying
\begin{subequations}
\label{eq:hdgeig}
\begin{alignat}{2}
\label{eq:hdgeig-a}
 ( c\, \vq_h,\vr)_{\oh} -
 ( u_h,\dive \vr)_{\oh} +   \ip{\eta_h,\vr\cdot\vn}_{\doh}
 &  = 0,
 &&\Forall \vr\in V_h,
 \\
\label{eq:hdgeig-b}
 -( \vq_h, \grad w)_{\oh} +
 \ip{ \qhat_h\cdot\vn, w}_{\doh}
& =
 \lambda_h\, ( u_h, w)_{\oh}
 &&
 \Forall w \in W_h,\\
\label{eq:hdgeig-c}
 \ip{\mu, \qhat_h \cdot \vn }_{\doh}
   & = 0
   && \Forall \mu \in M_h,
\end{alignat}
\end{subequations}
where $\qhat_h$ is defined by $\qhat_h = \vq_h + \tau \big( u_h -
\eta_h \big) \vn$, cf.~\eqref{eq:qhatf}.  Here, $\lambda_h\in \RRR$ is
the corresponding discrete eigenvalue.

The unique solvability of the source problem \eqref{method} implies that
$\lambda_h$ is nonzero.
One can easily verify that
 $\mu_h$ is an eigenvalue of $T_h$ if and
only if $\mu_h=1/\lambda_h$ for some $\lambda_h$ solving \eqref{eq:hdgeig}.
Moreover, $\lambda_h$ is positive as can be concluded from
the next lemma.
\begin{lemma}
  \label{lem:symm:Th}
  $T_h$ is self-adjoint and positive definite in $L^2(\Omega)$.
\end{lemma}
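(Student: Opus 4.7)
My plan is to prove both assertions from a single symmetric energy identity,
\[
(T_h f,\, g)_\Omega
=
(c\,\vq_h^f,\,\vq_h^g)_\oh
+ \ip{\tau(u_h^f - \eta_h^f),\;u_h^g - \eta_h^g}_\doh,
\]
valid for any $f, g \in L^2(\Omega)$, where $(\vq_h^f, u_h^f, \eta_h^f)$ and $(\vq_h^g, u_h^g, \eta_h^g)$ denote the HDG solutions of the source problem \eqref{method} corresponding to data $f$ and $g$. Once this identity is in hand, both conclusions follow directly.

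To derive it, I would first test \eqref{eq:method-b} for source $f$ with $w = u_h^g$ and integrate by parts element by element to move $\grad$ off $u_h^g$; using the definition $\qhat_h^f \cdot \vn - \vq_h^f \cdot \vn = \tau(u_h^f - \eta_h^f)$, the face terms collapse to
\[
(u_h^g, \dive \vq_h^f)_\oh + \ip{\tau(u_h^f - \eta_h^f),\; u_h^g}_\doh = (f, u_h^g)_\oh.
\]
Next, testing \eqref{eq:method-a} for source $g$ with $\vr = \vq_h^f$ rewrites $(u_h^g, \dive \vq_h^f)_\oh$ as $(c\,\vq_h^g, \vq_h^f)_\oh + \ip{\eta_h^g, \vq_h^f \cdot \vn}_\doh$, and \eqref{eq:method-c} for source $f$ tested with $\mu = \eta_h^g$ replaces $\ip{\eta_h^g, \vq_h^f \cdot \vn}_\doh$ by $-\ip{\eta_h^g, \tau(u_h^f - \eta_h^f)}_\doh$. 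Substituting these and grouping the $\doh$-terms yields the identity above.

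Self-adjointness is then immediate, because the right-hand side is symmetric in $(f,g)$ (using the pointwise symmetry of $c$ and the fact that $\tau$ is scalar), giving $(T_h f, g)_\Omega = (T_h g, f)_\Omega$. Setting $g = f$ gives $(T_h f, f)_\Omega = \|c^{1/2}\vq_h^f\|^2_\oh + \|\tau^{1/2}(u_h^f - \eta_h^f)\|^2_\doh \geq 0$. The main obstacle I anticipate is upgrading this to strict positivity. If $(T_h f, f)_\Omega = 0$, then $\vq_h^f \equiv 0$ in $\Omega$ and $u_h^f = \eta_h^f$ on every face where $\tau > 0$; since each element has at least one such face by the hypothesis ensuring well-posedness of \eqref{method}, these together yield $\qhat_h^f \cdot \vn \equiv 0$ on $\doh$. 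Then \eqref{eq:method-b} reduces to $(f, w)_\oh = 0$ for all $w \in W_h$, so the HDG system for data $f$ coincides with the homogeneous system, whose unique solution is the trivial one; hence $u_h^f = \eta_h^f = 0$, i.e.\ $T_h f = 0$. Thus $(T_h f, f)_\Omega > 0$ whenever $T_h f \neq 0$, which suffices to conclude that every nonzero eigenvalue of $T_h$, and consequently every $\lambda_h$ arising from \eqref{eq:hdgeig}, is positive.
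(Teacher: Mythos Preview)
Your proof is correct and follows essentially the same route as the paper: both derive the identity
\[
(f, T_h g)_\Omega = (c\,\vq_h^f,\vq_h^g)_\oh + \ip{\tau(u_h^f-\eta_h^f),\,u_h^g-\eta_h^g}_\doh
\]
by combining~\eqref{eq:method-b} (tested with $u_h^g$), integration by parts, \eqref{eq:method-a} (for source $g$, tested with $\vq_h^f$), \eqref{eq:method-c} (tested with $\eta_h^g$), and the definition of $\qhat_h^f$. One small slip: your derivation actually produces $(f,T_h g)$ on the right, not $(T_h f,g)$ as you wrote in the displayed identity; of course this is harmless because the point is precisely that the right-hand side is symmetric. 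For positivity, the paper simply invokes the earlier remark that unique solvability of~\eqref{method} forces $\lambda_h\neq 0$, whereas you argue directly that $(T_h f,f)=0$ forces $T_h f=0$; your argument is a bit more explicit but reaches the same conclusion.
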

\begin{proof}
  To show that $(f, T_h g) = (T_hf, g)$, where $(\cdot,\cdot)$ denotes
  the $L^2(\Omega)$-inner product, we calculate as follows:
  \begin{align*}
    (&f, T_h g)
     =  -( \vqf_h, \grad u_h^g)_{\oh} +
    \ip{ \qhat_h^f\cdot\vn, u_h^g}_{\doh},
    && \text{by~\eqref{eq:method-b} with } w=T_hg = u_h^g
    \\
    & =
    (\dive \vqf_h, u_h^g)_\oh
    +\ip{  ( \qhat_h^f - \vqf_h)\cdot\vn, u_h^g}_{\doh}
    &&\text{by integration by parts}
    \\
    & =(c\,\vqg_h,\vqf_h)_\oh
     + \ip{ \eta_h^g,\vqf_h\cdot\vn}_{\doh}
     +\ip{  ( \qhat_h^f - \vqf_h)\cdot\vn, u_h^g}_{\doh}
     &&\text{by~\eqref{eq:method-a} with } \vr=\vqf_h
     \\
     &  =(c\,\vqg_h,\vqf_h)_\oh
     - \ip{\vqf_h\cdot\vn,u_h^g - \eta_h^g}_{\doh}
     + \ip{ \qhat_h^f\cdot\vn, u_h^g - \eta_h^g}_{\doh}
     &&\text{by~\eqref{eq:method-c} with }\mu=\eta_h^g
     \\
     & =
       (c\,\vqg_h,\vqf_h)_\oh
       + \ip{\tau (u_h^f - \eta_h^f), (u_h^g-\eta_h^g)}_{\doh}
     &&\text{by~\eqref{eq:qhatf}}.
 \end{align*}
 The last expression is symmetric in $f$ and $g$ and is non-negative
 if $f=g$. This proves that $T_h$ is self-adjoint and positive
 semidefinite in $L^2(\Omega)$. As already noted previously, zero is
 not an eigenvalue of $T_h$, hence $T_h$ is positive definite.
\end{proof}

\section{Convergence of the spectrum}   \label{sec:convergence}

The convergence of the discrete eigenvalues to the exact ones is
proved by establishing convergence of $T_h$ to~$T$ in operator norm.
(Recall that $T$ and $T_h$ are defined in~\eqref{eq:T}
and~\eqref{eq:Th}, respectively.)  Such operator convergence was used
as the basis for the early analyses of spectral approximations using
conforming methods~\cite{BrambOsbor73,MerciOsborRappa81,Osbor75}. It
has also been used to analyze approximations of eigenvalue problems
using older discontinuous Galerkin methods (like the interior penalty
method)~\cite{AntonBuffaPerug06}. To apply this technique to the HDG
eigenproblem, we need the following basic result.

\begin{theorem}[Operator convergence]
  \label{thm:opcgce}
  Suppose there is an $s>1/2$ such  that
  any solution $(\vqf,
  u^f)$ of~\eqref{eq:theproblem} satisfies
  \begin{equation}
    \label{eq:1}
      \| \vqf \|_{H^s(\Om)} + \| u^f \|_{H^s(\Om)} \le C \| f \|_{L^2(\Om)},
 \end{equation}
 for all $f \in L^2(\Omega).$ Then
  \begin{equation}
    \label{eq:2}
    \| T - T_h \| \le  c_\tau  h^{\min(s,k+1)}
  \end{equation}
  where $\| \cdot \|$ denotes the $L^2(\Om)$-operator norm
  and $c_\tau =C\max\{ 1 + h_K^2 (\tau_K^*)^4 + (\taum)^{-2}: K\in\Th\}^{1/2}$.
\end{theorem}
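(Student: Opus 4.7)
The plan is to obtain \eqref{eq:2} by bounding the operator norm pointwise: since $(T-T_h)f = u^f - u_h^f$ for every $f\in L^2(\Omega)$, we have
\[
\|T - T_h\| = \sup_{0\ne f\in L^2(\Omega)}\; \frac{\|u^f - u_h^f\|_{L^2(\Omega)}}{\|f\|_{L^2(\Omega)}},
\]
so it suffices to derive an $f$-independent upper bound for $\|u^f-u_h^f\|_\oh$ in terms of $\|f\|_{L^2(\Omega)}$.

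First I would invoke the known source-problem error estimate in Theorem~\ref{thm:srccgce} to conclude
\[
\|u^f-u_h^f\|_\oh \;\le\; C\,\|u^f-\pw u^f\|_\oh \;+\; C\,b_\tau\,\|\vqf-\pv\vqf\,\|_c.
\]
Next, I would apply the projection approximation estimates \eqref{eq:PIapprx} element by element, choosing $s_u=s_q=s^\ast:=\min(s,k+1)$ (which lies in $(1/2,k+1]$ by the hypothesis $s>1/2$), and then square and sum over $K\in\Th$ to upgrade the local norms $|\vqf|_{H^{s^\ast}(K)}$ and $|u^f|_{H^{s^\ast}(K)}$ to their global counterparts. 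This produces an inequality of the schematic form
\[
\|u^f-u_h^f\|_\oh \;\le\; C\,h^{s^\ast}\,\Bigl(A_\tau\,|u^f|_{H^{s^\ast}(\Om)} + B_\tau\,|\vqf|_{H^{s^\ast}(\Om)}\Bigr),
\]
with $A_\tau$ and $B_\tau$ explicit $\tau$-dependent coefficients drawn from the $\tau_K^\ast$ and $(\taum)^{-1}$ weights in \eqref{eq:PIapprx} and from $b_\tau$. Finally, the regularity assumption \eqref{eq:1} lets me replace $|u^f|_{H^{s^\ast}(\Om)}+|\vqf|_{H^{s^\ast}(\Om)}$ by $C\|f\|_{L^2(\Om)}$, yielding \eqref{eq:2}.

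The main obstacle is the careful bookkeeping needed to show that the coefficient $A_\tau+B_\tau$ produced above can be absorbed into the stated $c_\tau$. I expect the $(\taum)^{-2}$ contribution inside $c_\tau^2$ to come directly from the $h_K^{s^\ast}/\taum$ term in \eqref{eq:PIapprx-b} used to control $\|u^f-\pw u^f\|_\oh$. The $h_K^2(\tau_K^\ast)^4$ contribution will arise from the cross term $b_\tau\,\|\vqf-\pv\vqf\,\|_c$ after squaring: the $h_K\tau_K^\ast$ term in $b_\tau$ multiplies the $\tau_K^\ast$-weighted part of \eqref{eq:PIapprx-a}, and squaring the product $h_K(\tau_K^\ast)^2$ gives exactly $h_K^2(\tau_K^\ast)^4$. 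The remaining lower-order contributions (such as $(\tau_K^\ast)^2$ alone or $h_K(\tau_K^\ast)^2/\taum$) are controlled by Young's inequality and absorbed into the $1$-term of $c_\tau^2$.

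A secondary technical point is that \eqref{eq:PIapprx} was stated in~\cite{CockbGopalSayas10,CockbDuboiGopal10} for $s_u,s_q\in(1/2,k+1]$, so one must verify that the chosen $s^\ast$ falls in this range; this is immediate from the hypothesis $s>1/2$ and the truncation $s^\ast\le k+1$. Since $c_\tau$ is defined uniformly over the mesh, the final bound is independent of $f$, and the supremum in the operator-norm definition is controlled as desired.
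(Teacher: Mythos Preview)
Your proposal is correct and follows essentially the same approach as the paper: apply the source-problem error estimate \eqref{eq:ee-u}, then the projection approximation bounds \eqref{eq:PIapprx}, and finally the regularity assumption \eqref{eq:1}. The paper's own proof is terse and simply cites these ingredients in sequence, whereas you have additionally spelled out the bookkeeping that collapses the $\tau$-dependent coefficients into the stated $c_\tau$; this extra detail is sound but not a departure from the paper's argument.
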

\begin{proof}
  The convergence results for the HDG source problem imply
  \begin{align*}
    \| Tf  - T_h f \|_{L^2(\Om)}
    &
    \le  C \| u - \pw u \|_{\LL\Th} + b_\tau
    C \| \vq - \pv \vq \,\|_c,
    && \text{by  Theorem~\ref{thm:srccgce}, eq.~\eqref{eq:ee-u},}
    \\
    &
    \le  C  c_\tau h^{\min(s,k+1)}
    ( |q|_{H^s(\Omega)} + | u|_{H^s(\Omega)}),
    && \text{by~\eqref{eq:PIapprx}}.
  \end{align*}
  Hence the result follows from~\eqref{eq:1}.
\end{proof}

Note that assumption~\eqref{eq:1} is a regularity assumption that holds, for
example, when $s \equiv 1$ and $\Omega$ is a polygonal domain
(with no cracks) in $\RRR^2$.

By virtue of Theorem~\ref{thm:opcgce} and the well-known consequences
of operator convergence~\cite{Ansel71,DunfoSchwa88,Osbor75}, we
conclude that the spectrum of $T_h$ approximates that of $T$, i.e.,
there is {\em no ``pollution'' of the spectrum} when it is
approximated by the HDG method.  To formulate the statement of this
approximation precisely in a form we can use later, let us recall some
standard terminology. The ``gap'' between two subspaces $X$ and $Y$ of
$L^2(\Omega)$ is defined by
\[
 \delta(X,Y)
  = \sup_{x \in X} \; \frac{\dist(x,Y)}{\|  x \|_{L^2(\Omega)}}
  = \sup_{y\in Y} \; \frac{\dist(y,X)}{\|  y \|_{L^2(\Omega)}}.
\]

Now, suppose $\mu$ is an eigenvalue of $T$ of multiplicity $m$ and let
$\vG$ be a positively oriented circle, contained in the resolvent set
of $T$, centered at $\mu$, and enclosing no other eigenvalue of $T$.
Define two operators, $E_h^\vG$ and $E^\vG$, both on $L^2(\Omega)$, by
the following integrals over $\vG$ in the complex
plane:
\[
\begin{aligned}
E_h^\vG
& = \frac 1 {2 \pi \ii} \oint_\vG ( z - T_h)^{-1} dz,
&\qquad
E^\vG
& = \frac 1 {2 \pi \ii} \oint_\vG ( z - T)^{-1} dz.
\end{aligned}
\]
Hereon, we omit the superscript $\vG$ in $E_h^\vG$ and $E^\vG$ as
$\vG$ will always be taken as stated above.  A well-known result is
that $E$ is a projection onto the eigenspace of $T$ corresponding to
the eigenvalue~$\mu$. The discrete analogue of this result for $E_h$
appears in results collected below.  The collection summarizes a few
consequences of operator convergence.  The arguments proving these
consequences are
standard~\cite{AntonBuffaPerug06,MerciOsborRappa81,Osbor75}, and since
they apply to
the HDG context with few modifications, we shall not
repeat them. We use $R(A)$ to denote the range of any operator $A$ and
$C_\tau$ to denote generic constants independent of $h$, but dependent
on $\tau$.

\begin{corollary}[Convergence of eigenvalues and eigenfunctions]
  \label{cor:eigcgce}
  Let $\mu$ and $\vG$ be as above and let the assumption of
  Theorem~\ref{thm:opcgce} hold for some $s>1/2$.  Then, there exists
  $h_\vG>0$ such that for all $h < h_\vG$ the following statements
  hold:
  \begin{enumerate}

  \item There are exactly $m$ eigenvalues of $T_h$ within $\vG$, which
    we count according to multiplicity and enumerate as $\mu_{h,1},
    \mu_{h,2}, \ldots, \mu_{h,m}$.

  \item \label{item:2} The operator $E_h$ is a projection onto the span of the
    eigenfunctions of $T_h$ corresponding to all the eigenvalues
    $\mu_{h,j}$ for $j=1,\ldots,m$.

  \item \label{item:3} The operator $E_h$ converges to $E$ as $h\rightarrow 0$ and
    \begin{equation}
      \label{eq:12}
          \| E - E_h \| \le
          C \| T - T_h \|
          \le C_\tau h^{\min(s,k+1)}.
   \end{equation}

 \item \label{item:4} The exact and discrete eigenspaces (of $\mu$ and
   $\{\mu_{h,j}\}_{j=1}^m $, respectively) are $R(E)$ and $R(E_h)$,
   respectively. The discrete eigenspaces converge in the sense that
    \begin{equation}
      \label{eq:13}
          \delta( R(E), R(E_h) )     \le C_\tau h^{\min(s,k+1)}.
    \end{equation}

    \item If in addition, the eigenfunctions of $\mu$
      have a higher
      regularity index, i.e., if
      \[
      \| \vqf \|_{H^{s_\mu}(\Om)} +
      \| u^f \|_{H^{s_\mu}(\Om)} \le C \| f \|_{L^2(\Om)},
      \qquad\quad\forall
      f \in R(E),
      \]
      with $s_\mu \ge s$,
      then~\eqref{eq:12} can be refined to
      \begin{equation}
        \label{eq:14}
        \big\| (E - E_h )\raisebox{-1pt}{$|$}_{R(E)}\big\|
        \le
        C \big\| (T - T_h )\raisebox{-1pt}{$|$}_{R(E)}\big\|
        \le C_\tau h^{\min(s_\mu,k+1)}.
      \end{equation}
      and consequently~\eqref{eq:13} can be revised to
      $
      \delta( R(E), R(E_h) )
      \le C_\tau h^{\min(s_\mu,k+1)}$.
    \end{enumerate}
\end{corollary}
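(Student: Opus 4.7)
The plan is to follow the classical Dunford--Riesz / Osborn approach, using Theorem~\ref{thm:opcgce} as the only HDG-specific input. The main engine is operator-norm convergence combined with the second resolvent identity
\[
(z-T)^{-1} - (z-T_h)^{-1} = (z-T)^{-1}(T-T_h)(z-T_h)^{-1},
\]
applied pointwise in $z\in\vG$ and then integrated. Since $\vG$ is a compact subset of the resolvent set $\rho(T)$, the resolvent $(z-T)^{-1}$ is uniformly bounded on $\vG$. The first preparatory step is to fix $h_\vG>0$ so small that $\|T-T_h\|$ is dominated by $(\sup_{z\in\vG}\|(z-T)^{-1}\|)^{-1}/2$; then a Neumann series argument shows $\vG\subset\rho(T_h)$ and that $(z-T_h)^{-1}$ is uniformly bounded in $z\in\vG$ and $h<h_\vG$. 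This legitimizes the definition of $E_h$, and, by the Dunford--Riesz functional calculus, makes $E_h$ a projection onto the $T_h$-invariant subspace spanned by (generalized) eigenfunctions corresponding to all eigenvalues of $T_h$ inside $\vG$. Because $T_h$ is self-adjoint by Lemma~\ref{lem:symm:Th}, generalized eigenvectors reduce to eigenvectors, establishing item~\ref{item:2}.

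For item~\ref{item:3}, I would integrate the resolvent identity over $\vG$ and take operator norms to obtain
\[
\|E-E_h\| \;\le\; \tfrac{1}{2\pi}\,|\vG|\,\sup_{z\in\vG}\|(z-T)^{-1}\|\,\sup_{z\in\vG}\|(z-T_h)^{-1}\|\;\|T-T_h\|
\;\le\; C\,\|T-T_h\|,
\]
and then invoke Theorem~\ref{thm:opcgce} for the $h^{\min(s,k+1)}$ rate. Item~1 is then a routine consequence: any pair of projections at distance less than $1$ in operator norm have equal rank, so $\dim R(E_h) = \dim R(E) = m$; self-adjointness of $T_h$ ensures that this rank equals the sum of geometric multiplicities of its eigenvalues inside $\vG$, i.e., there are exactly $m$ such eigenvalues counted with multiplicity.

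Item~\ref{item:4} follows from the elementary bound $\delta(R(E),R(E_h))\le\|E-E_h\|$: for $x\in R(E)$ we have $x=Ex$, so $\dist(x,R(E_h))\le \|x-E_hx\|=\|(E-E_h)x\|$, and symmetrically for $y\in R(E_h)$. For item~5, the refinement rests on the invariance of $R(E)$ under $T$, hence under $(z-T)^{-1}$ for $z\in\vG$. Writing
\[
(E-E_h)\raisebox{-1pt}{$|$}_{R(E)}
= \frac{1}{2\pi\ii}\oint_\vG (z-T_h)^{-1}(T_h-T)(z-T)^{-1}\raisebox{-1pt}{$|$}_{R(E)}\,dz,
\]
the factor $(z-T)^{-1}$ maps $R(E)$ into $R(E)$, so the argument of $T_h-T$ lives in $R(E)$, where the sharper regularity hypothesis together with the proof of Theorem~\ref{thm:opcgce} gives $\|(T-T_h)y\|\le C_\tau h^{\min(s_\mu,k+1)}\|y\|$. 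The gap refinement then proceeds exactly as in item~\ref{item:4}, but with the improved rate.

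The only genuinely substantive step is the uniform boundedness of $(z-T_h)^{-1}$ on $\vG$ for small $h$; the rest is bookkeeping. In our setting this step is painless because $T_h$ is self-adjoint and converges in norm to $T$, so the Neumann series converges uniformly on $\vG$ once $\|T-T_h\|$ is small enough relative to $\operatorname{dist}(\vG,\sigma(T))$. This is why the paper can legitimately defer to the standard treatments of~\cite{AntonBuffaPerug06,MerciOsborRappa81,Osbor75}.
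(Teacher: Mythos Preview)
Your proposal is correct and is precisely the standard Dunford--Riesz/Osborn argument that the paper invokes by citation rather than writing out; the paper gives no proof of this corollary at all, stating only that ``the arguments proving these consequences are standard~\cite{AntonBuffaPerug06,MerciOsborRappa81,Osbor75}, and since they apply to the HDG context with few modifications, we shall not repeat them.'' Your sketch is exactly the content of those references specialized to the present self-adjoint setting, so there is nothing to compare.

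One small remark on item~5: in the gap refinement you say ``proceeds exactly as in item~\ref{item:4}'', but there both directions used the full norm $\|E-E_h\|$, whereas here only $\|(E-E_h)|_{R(E)}\|$ is controlled, so only the direction $\sup_{x\in R(E)}\dist(x,R(E_h))/\|x\|$ follows immediately. The other direction is recovered either from the equality of the two one-sided gaps for equidimensional closed subspaces of a Hilbert space (which is implicit in the paper's symmetric definition of $\delta$), or by noting that both $E$ and $E_h$ are orthogonal projections so $\|(I-E_h)E\|=\|(I-E)E_h\|$. Either way the fix is one line.
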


\section{Rate of convergence of eigenvalues}   \label{sec:rate}

In this section, we prove that under favorable regularity conditions,
the HDG eigenvalues converge at the rate $O(h^{2k+1})$ when we use
polynomials of degree at most~$k\ge 0$ for all variables. To do so, we
begin with the setting of Corollary~\ref{cor:eigcgce} and refine a few
estimates through a duality argument. Accordingly, we keep the same
notations as in Corollary~\ref{cor:eigcgce}, and tacitly assume
throughout this section that the assumptions in the corollary hold.
In particular, recall that $R(E)$ is the eigenspace of $T$
corresponding to an eigenvalue~$\mu$ and $\mu_{h,j}$ are the discrete
eigenvalues near $\mu$.

\begin{theorem}
  \label{thm:ewrate}
  Suppose there is an  $s_\mu>1/2$ such that
  \begin{equation}
    \label{eq:3}
      \| \vqf \|_{H^{s_\mu}(\Om)} + \| u^f \|_{H^{s_\mu+1}(\Om)} \le C \| f \|_{L^2(\Om)},
      \qquad\quad\forall
      f \in R(E).
  \end{equation}
  Then there is an $h_\mu>0$ such that for all $h<h_\mu$,
  \begin{equation}
    \label{eq:4}
    |\mu - \mu_{h,j} | \le C h^{\min(s_\mu,k+1) + \min(s_\mu,k)}.
  \end{equation}
\end{theorem}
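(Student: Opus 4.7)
Since both $T$ and $T_h$ are self-adjoint and positive definite (Lemma~\ref{lem:symm:Th}), my approach is the classical Babu\v{s}ka--Osborn framework for spectral approximation of self-adjoint compact operators. First I would invoke the eigenvalue representation formula: for each $\mu_{h,j}$ clustering around $\mu$, there exists an $L^2$-orthonormal set $\{\phi_j\}_{j=1}^m \subset R(E)$ (constructed, when $m>1$, via the standard matching between $R(E)$ and $R(E_h)$ produced by $E_h|_{R(E)}$) such that
\[
\mu - \mu_{h,j} = \frac{((T - T_h)\phi_j, \phi_j)}{\|\phi_j\|_{L^2(\Om)}^2} + R_j,
\qquad |R_j| \le C\, \big\|(T - T_h)|_{R(E)}\big\|^2.
\]
Corollary~\ref{cor:eigcgce} bounds $\|(T-T_h)|_{R(E)}\|$ by $C_\tau h^{\min(s_\mu,k+1)}$, so $|R_j| \le C_\tau h^{2\min(s_\mu,k+1)}$, which already dominates the rate claimed in~\eqref{eq:4}. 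The whole task therefore reduces to sharpening the bound on the leading quadratic form $((T - T_h)\phi, \phi)$ for $\phi \in R(E)$.

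Next I would substitute the symmetry identity produced in the proof of Lemma~\ref{lem:symm:Th} with $f = g = \phi$ and its continuous counterpart $(T\phi, \phi) = (c\vq^\phi, \vq^\phi)$ to obtain
\[
((T - T_h)\phi, \phi) = 2\,(c(\vq^\phi - \vq_h^\phi), \vq^\phi)_\oh - \|\vq^\phi - \vq_h^\phi\|_c^2 - \ip{\tau(u_h^\phi - \eta_h^\phi), u_h^\phi - \eta_h^\phi}_\doh.
\]
The two quadratic terms are $O(h^{2\min(s_\mu,k+1)})$ by Theorem~\ref{thm:srccgce} together with~\eqref{eq:PIapprx}, so the analysis collapses to the linear term $(c(\vq^\phi - \vq_h^\phi), \vq^\phi)_\oh$. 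Using $c\vq^\phi = -\nabla u^\phi$ and integration by parts element-by-element, the linear term becomes
\[
(\nabla\cdot(\vq^\phi - \vq_h^\phi), u^\phi)_\oh - \ip{(\vq^\phi - \vq_h^\phi)\cdot\vn, u^\phi}_\doh.
\]
Splitting $u^\phi$ as $\pw u^\phi + (u^\phi - \pw u^\phi)$, the contribution from $\pw u^\phi$ collapses via the HDG equations \eqref{eq:method-b}, \eqref{eq:method-c} and the numerical flux formula~\eqref{eq:qhatf} (together with the single-valuedness of $\qhat_h\cdot\vn$) into a pure $\tau$-stabilization contribution bounded by $C h^{2\min(s_\mu,k+1)}$. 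The contribution from $u^\phi - \pw u^\phi$, treated by integrating backward by parts and using Cauchy--Schwarz with trace/inverse inequalities, leads to a bound of the form
\[
\big|(c(\vq^\phi - \vq_h^\phi), \vq^\phi)_\oh\big| \le C\,\|\vq^\phi - \vq_h^\phi\|_c\,\|\nabla(u^\phi - \pw u^\phi)\|_\oh + (\text{lower order}).
\]
The first factor is $O(h^{\min(s_\mu,k+1)})$ by Theorem~\ref{thm:srccgce} and~\eqref{eq:PIapprx}, while the second factor is $O(h^{\min(s_\mu,k)})$, since an $H^1$-seminorm bound on $u^\phi - \pw u^\phi$ with $u^\phi \in H^{s_\mu+1}(\Om)$ (available precisely because of the enhanced regularity assumption~\eqref{eq:3}) loses one order relative to the $L^2$ rate. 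Multiplying yields the claimed bound $Ch^{\min(s_\mu,k+1)+\min(s_\mu,k)}$.

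The principal obstacle is the mismatched exponents $\min(s_\mu,k+1)$ and $\min(s_\mu,k)$: one order is lost in the second factor above, producing $h^{2k+1}$ rather than $h^{2k+2}$. As emphasized in the introduction, this loss reflects the fact that the HDG projection $\pv$ satisfies only a weak analogue of the clean commutativity identity $\nabla\cdot\pv\vq = \pw(\nabla\cdot\vq)$ enjoyed by the Raviart--Thomas projection in~\cite{BoffiBrezzGasta00, hrt:2010}; that identity is precisely what would save the extra factor of $h$ in a mixed-method duality argument. The technical delicacy of the proof therefore lies in carefully tracking the numerous $\tau$-stabilization and numerical-trace terms through these manipulations so that they never accumulate into bounds worse than $h^{2\min(s_\mu,k+1)}$, and in handling the multiplicity case $m>1$ cleanly through the compatibility between $R(E)$ and $R(E_h)$ given by Corollary~\ref{cor:eigcgce}.
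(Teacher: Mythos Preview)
Your route is genuinely different from the paper's. The paper does not use the Babu\v{s}ka--Osborn representation $\mu-\mu_{h,j}=((T-T_h)\phi_j,\phi_j)+R_j$ at all. Instead it builds two finite-dimensional operators on $R(E)$: $\hat T=T|_{R(E)}$ and $\hat T_h=J_h^{-1}T_hJ_h|_{R(E)}$, where $J_h e=E_h\pw e$ (Lemma~\ref{lem:Jh} shows $J_h$ is a bijection $R(E)\to R(E_h)$ for small $h$). It then applies the Bauer--Fike theorem to bound $|\mu-\mu_{h,j}|$ by $\|\hat T-\hat T_h\|$, and estimates the latter via a duality argument that splits $(\pw Tf-T_h\pw f,E_hg)$ into four pieces $t_1,\dots,t_4$. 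The two substantive pieces are handled by (i) the packaged duality identity \cite[Lemma~4.1]{CockbGopalSayas10} for $t_2$, and (ii) the orthogonality~\eqref{eq:proj2} of $\pw$ to $P_{k-1}$, which lets one insert the degree-$(k-1)$ HDG solution $u_{h,k-1}^g$ in $t_4$. Your approach bypasses the $J_h$ machinery and works directly with the energy form; it is more elementary in spirit and closer to how the mixed-method case is sometimes argued. The paper's approach, by contrast, outsources all the face-term bookkeeping to the duality identity of~\cite{CockbGopalSayas10}, which is cleaner once that identity is in hand.

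One caution on your sketch: the step where ``the contribution from $\pw u^\phi$ collapses \dots\ into a pure $\tau$-stabilization contribution bounded by $Ch^{2\min(s_\mu,k+1)}$'' hides real work. Carrying it through, that contribution reduces to a face term of the form $\ip{(\vq^\phi-\qhat_h^\phi)\cdot\vn,\ \pw u^\phi-\eta_h^\phi}_{\doh}$, whose estimation requires control of $\pw u^\phi-u_h^\phi$ and $\eta_h^\phi-P_M u^\phi$ on element boundaries, i.e.\ essentially the superconvergence ingredients from~\cite{CockbGopalSayas10}. Also, your bound $\|\nabla(u^\phi-\pw u^\phi)\|_\oh=O(h^{\min(s_\mu,k)})$ is delicate because $\pw$ depends on $\vq^\phi$ through~\eqref{eq:proj3}, and the $\vq$-contribution in~\eqref{eq:PIapprx-b} only scales like $h^{s_q}/\tau_K^{\max}$; an inverse inequality applied to it loses a full power of $h$. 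Splitting instead with the plain $L^2$-projection $P_k u^\phi$ (which also lies in $W_h$, so method equation~\eqref{eq:method-b} still applies) avoids this snag and gives $\|\nabla(u^\phi-P_k u^\phi)\|_\oh=O(h^{\min(s_\mu,k)})$ directly from standard approximation theory.
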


This is the main result of this section, and in the remainder of this
section, we prove it. As we shall see, we are able to apply the needed
duality techniques for this proof, thanks to properties of the
projection operator $\pr_h(\vq,u)$ of~\cite{CockbGopalSayas10},
recalled in Subsection~\ref{ssec:source}.  Given any eigenfunction $e
\in R(E)$, let us denote its corresponding flux by
\begin{equation}
  \label{eq:7}
  \vq_e = -\alpha \grad e.
\end{equation}
Then $\pw e$ is the component of $\pr_h(\vq_e,e)$ in $W_h$.
Define
\[
J_h e = E_h \pw e
\]
for all $e$ in $R(E)$.  Using these notations, we begin the
analysis with the following two lemmas.

\begin{lemma}
  \label{lem:symm}
  $E_h$ is a self-adjoint operator in $L^2(\Omega)$.
\end{lemma}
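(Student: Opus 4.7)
The plan is to leverage Lemma~\ref{lem:symm:Th}, where $T_h$ has already been shown to be self-adjoint on $L^2(\Omega)$, and then transfer that property to the Riesz projector $E_h$ defined by a contour integral of the resolvent. The whole statement is really just the familiar fact that the spectral projector associated with a part of the spectrum of a self-adjoint operator is orthogonal.

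First I would observe that self-adjointness of $T_h$ yields, for every $z$ in its resolvent set, the resolvent identity $\bigl((z - T_h)^{-1}\bigr)^* = (\bar z - T_h)^{-1}$. Parametrizing $\vG$ as $z(t) = \mu + r e^{i t}$ for $t \in [0, 2\pi]$, I would write
\begin{equation*}
  E_h = \frac{1}{2 \pi \ii} \int_0^{2\pi} (z(t) - T_h)^{-1} \, z'(t) \, dt,
\end{equation*}
pass the adjoint inside this (operator-valued) integral, and apply the resolvent identity together with the conjugation of the scalar factors $\overline{1/(2\pi \ii)} = -1/(2\pi \ii)$ and $\overline{z'(t)}$. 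A change of variable $s = 2\pi - t$ then reparametrizes $\vG$: since $\overline{z(t)} = \mu + r e^{-it} = z(2\pi - t)$, the sign from the Jacobian of the substitution, the sign from $\overline{1/(2\pi \ii)}$, and the sign coming from $\frac{d}{dt}[z(2\pi - t)] = - z'(2\pi - t)$ are expected to combine to produce exactly the original integral defining $E_h$. This gives $E_h^* = E_h$.

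The only real obstacle is the bookkeeping of complex conjugation and contour orientation; no deeper input beyond Lemma~\ref{lem:symm:Th} is required. As a conceptual alternative that could be mentioned after the direct calculation, one may invoke the spectral theorem: since $T_h$ has finite-dimensional range and is self-adjoint, it admits an $L^2(\Omega)$-orthogonal eigendecomposition, and the Riesz projector associated with the finitely many real eigenvalues enclosed by $\vG$ (cf.\ item~(1) of Corollary~\ref{cor:eigcgce}) is then the $L^2(\Omega)$-orthogonal projection onto the span of the corresponding eigenfunctions, hence trivially self-adjoint.
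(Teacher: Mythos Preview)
Your proposal is correct. The paper's proof, however, is a two-line appeal to a general fact: since $T_h$ is self-adjoint (Lemma~\ref{lem:symm:Th}), it is normal, and the spectral projection of any normal operator is self-adjoint (citing Kato's \emph{Perturbation Theory for Linear Operators}). Your main argument---the explicit contour-integral computation using $\bigl((z-T_h)^{-1}\bigr)^{*}=(\bar z-T_h)^{-1}$ and the reflection $t\mapsto 2\pi-t$ that maps $\vG$ to its complex conjugate---is a more hands-on route that unpacks exactly what the cited theorem encodes. The sign bookkeeping you sketch is accurate. Your ``conceptual alternative'' at the end (diagonalize the finite-rank self-adjoint $T_h$ and identify $E_h$ with the orthogonal projection onto the eigenspaces inside $\vG$) is essentially the same in spirit as the paper's citation; either of your two arguments would be acceptable, the first being self-contained, the second matching the paper's brevity.
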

\begin{proof}
  By Lemma \ref{lem:symm:Th}, $T_h$ is self-adjoint.  It is well-known
  that the spectral projection of any normal operator is
  self-adjoint~\cite{Kato95}. Since $E_h$ is the spectral projection
  of $T_h$, the lemma follows.
\end{proof}

{\LL From now on, to simplify notation, let us abbreviate the norm $\|
  \cdot \|_{L^2(\Omega)}$ to simply $\| \cdot \|$, whenever it cannot
  be confused with the previously defined $L^2(\Omega)$-operator norm, which we
  continue to also denote by $\| \cdot\|$.}

\begin{lemma}
  \label{lem:Jh}
  Suppose~\eqref{eq:3} holds. Then, there exists an $h_0>0$ such that
  for all $h < h_0$, the operators
  \mbox{$J_h : R(E) \longmapsto R(E_h)$}
  and
  \mbox{$E_h \raisebox{-1pt}{$|$}_{R(E)} : R(E) \longmapsto R(E_h)$}
  are bijections, and there are $h$-independent constants $C_{\tau,j}$
  such that
  \begin{align}
    \label{eq:8}
    C_{\tau,1} \| e \| & \le \| J_h e \| \le C_{\tau,2} \| e \|,
    \\
    \label{eq:18}
    C_{\tau,3} \| e \| & \le \| E_h e \| \le \| e \|,
  \end{align}
  for all $e$ in $R(E)$.
\end{lemma}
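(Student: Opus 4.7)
The plan is to exploit that $\dim R(E) = m$ (the multiplicity of $\mu$) equals $\dim R(E_h) = m$ (by Corollary~\ref{cor:eigcgce}(i)--(ii), using that $T_h$ self-adjoint makes geometric and algebraic multiplicities coincide), so each claimed bijection between these finite-dimensional spaces reduces to injectivity. Injectivity will in turn follow from the two lower bounds in~\eqref{eq:8} and~\eqref{eq:18}, which are really the substance of the lemma. I would establish the upper bounds first, then the lower bound on $\|E_h e\|$, and finally the lower bound on $\|J_h e\|$ by perturbing off $E_h e$.

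\emph{Upper bounds.} Because $T_h$ is self-adjoint in $L^2(\Omega)$ (Lemma~\ref{lem:symm:Th}) and $E_h$ is its spectral projection, $E_h$ is an orthogonal projection in $L^2(\Omega)$, so $\|E_h\| \le 1$. This immediately gives $\|E_h e\|\le\|e\|$ and $\|J_h e\|=\|E_h \pw e\|\le\|\pw e\|$. Applying~\eqref{eq:PIapprx-b} element-wise with $s_u=\min(s_\mu+1,k+1)$ and $s_q=\min(s_\mu,k+1)$, and using the regularity~\eqref{eq:3} (applied to $f=\mu^{-1} e \in R(E)$, giving bounds on $\|e\|_{H^{s_u}}$ and $\|\vq_e\|_{H^{s_q}}$), I obtain
\[
\|\pw e - e\| \le C_\tau h^{r}\|e\|
\]
for some $r>0$. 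Hence $\|J_h e\|\le \|e\|+C_\tau h^r\|e\|\le C_{\tau,2}\|e\|$, settling both upper bounds.

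\emph{Lower bound~\eqref{eq:18}.} Since $E$ is a projection onto $R(E)$, we have $Ee=e$ for every $e\in R(E)$, so
\[
\|E_h e\| \ge \|e\| - \|(E-E_h)e\| \ge \bigl(1 - C_\tau h^{\min(s,k+1)}\bigr)\|e\|
\]
by Corollary~\ref{cor:eigcgce}(iii). Taking $h_0$ small enough that $C_\tau h_0^{\min(s,k+1)}\le 1/2$ yields~\eqref{eq:18} with $C_{\tau,3}=1/2$. Injectivity of $E_h\raisebox{-1pt}{$|$}_{R(E)}$, hence bijectivity by the equal-dimension argument, follows.

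\emph{Lower bound~\eqref{eq:8}.} Decompose $J_h e = E_h e + E_h(\pw e - e)$ and apply the triangle inequality together with $\|E_h\|\le 1$:
\[
\|J_h e\| \ge \|E_h e\| - \|\pw e - e\| \ge \bigl(C_{\tau,3} - C_\tau h^r\bigr)\|e\|.
\]
Shrinking $h_0$ further so the parenthesized quantity is at least $C_{\tau,3}/2$ gives~\eqref{eq:8} with $C_{\tau,1}=C_{\tau,3}/2$. Since $J_h$ maps $R(E)$ into $R(E_h)$ (the latter is, by definition, the range of $E_h$), injectivity again upgrades to bijectivity by matching dimensions. The main (minor) obstacle is merely pinning down the correct exponent $r$ from~\eqref{eq:PIapprx-b} using the regularity hypothesis~\eqref{eq:3}; everything else is standard spectral-projection bookkeeping that relies on the self-adjointness of $T_h$ and the operator-norm convergence from Theorem~\ref{thm:opcgce}.
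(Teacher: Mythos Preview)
Your proposal is correct and follows essentially the same approach as the paper: both arguments use self-adjointness of $T_h$ to conclude $\|E_h\|\le 1$, the projection estimate $\|e-\pw e\|\le C_\tau h^r\|e\|$ from~\eqref{eq:PIapprx-b} and~\eqref{eq:3}, and the operator convergence $E_h\to E$ from Corollary~\ref{cor:eigcgce}, then reduce bijectivity to injectivity via $\dim R(E)=\dim R(E_h)$. The only cosmetic differences are the order of the steps (you prove~\eqref{eq:18} first and then bootstrap to~\eqref{eq:8}, whereas the paper handles~\eqref{eq:8} directly from $\|e\|$) and the justification of equal dimensions (you invoke self-adjointness to match algebraic and geometric multiplicities, while the paper uses the gap argument from Corollary~\ref{cor:eigcgce}(\ref{item:4})).
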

\begin{proof}
  By item~\ref{item:4} of Corollary~\ref{cor:eigcgce}, the gap between
  $R(E)$ and $R(E_h)$ becomes less than one when $h$ is small enough,
  and consequently $\dim(R(E)) = \dim(R(E_h))$ (see,
  e.g.,~\cite[Lemma~221]{Kato58}).  Therefore, to prove the stated
  bijectivity, we only need to prove injectivity.

  In preparation, we recall that by Lemma~\ref{lem:symm}, $E_h$ is
  self-adjoint, and by Corollary~\ref{cor:eigcgce}(\ref{item:2}),
  $E_h$ is a projector. Hence $E_h$ is an orthogonal
  projector. Orthogonal projectors have unit norm, hence
 \begin{equation}
    \label{eq:5}
        \| E_h v \| \le \| v\|,\qquad \forall v \in L^2(\Omega).
  \end{equation}
  A second preparatory inequality we need is
  \begin{equation}
    \label{eq:19}
    \| e - \pw e \| \le C_\tau h^r \| e \|,\qquad \forall e \in R(E),
  \end{equation}
  with $r=\min(s_\mu,k+1).$ This is a consequence
  of~\eqref{eq:PIapprx-b}, by which
  \begin{align*}
    \| e - \pw e \|
    & \le C_\tau h^r
    ( | e |_{H^{s_\mu(\Omega)}} +| \vq_e |_{H^{s_\mu}(\Omega)} ),
  \end{align*}
  where $\vq_e$ is as in~\eqref{eq:7}. Thus,~\eqref{eq:19} follows
  from the regularity assumption~\eqref{eq:3}.

  Let us now prove~\eqref{eq:8},  beginning with the lower bound.
  \begin{align*}
    \| J_h e \|
    & \ge  \| e \| - \| e - E_h \pw e \|
    \\
    & = \| e \| - \| (E e - E_h e) + E_h (e -\pw e ) \|
    && \text{since $e \in R(E)$}
    \\
    & \ge  \| e \| - \| (E - E_h) e \| - \|e -\pw e\|
    &&\text{by~\eqref{eq:5}}
    \\
    & \ge
    (1 - C_\tau h^r )\| e \|
    &&\text{by~\eqref{eq:19} and~\eqref{eq:14}.}
  \end{align*}
  Therefore, the lower bound follows by choosing small enough~$h$.
  The injectivity of $J_h$ is an obvious consequence of this lower
  bound.  The upper bound of~\eqref{eq:8} immediately follows by
  combining~\eqref{eq:19},
  \[
  \| \pw e \| \le (1+C_\tau h^r ) \| e\|\qquad \forall e \in R(E),
  \]
  with~\eqref{eq:5}.

  To finish the proof, note that the upper bound in~\eqref{eq:18} is
  already proved in~\eqref{eq:5}. The lower bound in~\eqref{eq:18}
  (and the consequent injectivity of $E_h$ on $R(E)$) follows by an
  simpler argument similar to the above.
\end{proof}

\bigskip

\begin{proof}[Proof of Theorem~\ref{thm:ewrate}]

  As a first step, we define two {\em finite} dimensional operators whose
  eigenvalues are $\mu$ and $\mu_{h,j}$. Let $h$ be so small that we can conclude
  by Lemma~\ref{lem:Jh} that
  $J_h^{-1}: R(E_h)\to R(E)$ exists.
  Set $\hat T = T|_{R(E)}$ and
  $\hat T_h = J_h^{-1} T_h J_h|_{R(E)}$.  Both the operators
  \[
  \hat T: R(E)  \to  R(E)
  \qquad\text{and}
  \qquad
  \hat T_h : R(E)  \to  R(E)
  \]
  are finite dimensional.  $\hat T$ has $\mu$ as its (only) eigenvalue
  of of multiplicity $m$. Moreover, it is easy to see that
  $\mu_{h,j}$'s are the eigenvalues of $\hat T_h$. Hence, by the
  Bauer-Fike theorem~\cite{BauerFike60},
  \begin{equation}
    \label{eq:9}
    | \mu - \mu_{h,j}| \le C \| \hat T - \hat T_h \|.
  \end{equation}
  The remainder of the proof bounds the above right hand side
  appropriately.

  To this end,  let $f\in R(E)$, and consider $(\hat T - \hat T_h )f$. Then,
  \begin{align}
    \nonumber
    C_{\tau,1}\|( \hat T - \hat T_h )f \|
    & \le  \| J_h ( \hat T - \hat T_h )f \|
    && \text{by~\eqref{eq:8} of Lemma~\ref{lem:Jh}}
    \\     \nonumber
    & = \| E_h \pw T f - T_h E_h\pw  f \|
    \\ \label{eq:10}
    & = \| E_h (\pw T f - T_h   \pw f) \|
    &&\text{as $T_h$ and $E_h$ commute}.
  \end{align}
  We bound the norm in~\eqref{eq:10} by duality, as follows.  By
  Lemma~\ref{lem:Jh}, any $g_h$ in $R(E_h)$ can be written as $E_h g$
  for some $g$ in $R(E)$.  Therefore,
  \begin{align}
    \nonumber
    \| E_h (\pw T f - T_h   \pw f) \|
    & = \sup_{g_h\in R(E_h)}
        \frac{ ( E_h (\pw T f - T_h   \pw f), g_h ) }{ \| g_h \| }
        \\
        \nonumber
    & = \sup_{g\in R(E)}
        \frac{ ( E_h (\pw T f - T_h   \pw f), E_h g ) }{ \| E_h g \| }
        \\
        \label{eq:11}
    & \le \frac{1}{C_{\tau,3}}\;
       \sup_{g\in R(E)}
        \frac{ ( \pw T f - T_h   \pw f, E_h g ) }{ \| g \| }.
  \end{align}
  Note that to obtain the numerator above, we used the
  self-adjointness of $E_h$ given by Lemma~\ref{lem:symm}, while to
  obtain the denominator, we used Lemma~\ref{lem:Jh}.

  It will now be convenient to split the numerator in~\eqref{eq:11}
  into several terms.  With $f$ and $g$ in $R(E)$, we write
  \begin{align*}
    (\pw T f - T_h   \pw f, E_h g )
    & = (\pw T f - T_h f, E_h g ) + (T_h (f-  \pw f), E_h g )
    \\
    & = t_1 + t_2 + t_3 + t_4
  \end{align*}
  where
  \begin{align*}
    t_1 & = (\pw T f - T_h f, E_h g -g ) \\
    t_2 & = (\pw T f - T_h f, g ) \\
    t_3 & = (T_h (f-  \pw f), E_h g -g ) \\
    t_4 & = (T_h (f-  \pw f), g ),
 \end{align*}
 and proceed to estimate the $t_i$'s, beginning with $t_1$.
 Let $r_1=\min(s_\mu,k+1)$. We  use Theorem~\ref{thm:srccgce}
 and~\eqref{eq:14} to get
 \begin{align*}
   t_1 & = ( \pw u^f - u_h^f, (E_h-E) g )
    \\
    & \le C_\tau h^{r_1} ( |u^f|_{H^{r_1}(\Omega)} + |q^f|_{H^{r_1}(\Omega)})  h^{r_1} \| g \|
    \\
    & \le C_\tau h^{2{r_1}} \| f \|  \;\| g \|,
 \end{align*}
 by~\eqref{eq:3}.

 Next, for $t_2$, we use a previously known duality
 identity~\cite[Lemma~4.1]{CockbGopalSayas10} by which
 \begin{align*}
   t_2=(\pw u^f - u_h^f, g )
    & = (c\, (\vqf - \vqf_h), \pv \vqg - \vqg) +
       ( \vqf - \pv \vqf, \grad u^g -\grad w_h )_\oh
  \end{align*}
  for any $w_h \in W_h.$ The first term on the right hand side can be
  bounded by $C_\tau h^{2r_1} \| f \| \;\| g \|$
  using~\eqref{eq:PIapprx} and Theorem~\ref{thm:srccgce}. For the
  second term, we use the standard result
  \[
  \inf_{w_h \in W_h}  \| \grad u^g -\grad w_h \| \le C  h^{r_0} |u^g|_{H^{r_0+1}(\Omega)}
  \]
  with $r_0 = \min(s_\mu,k)$, to get
  \begin{align*}
    t_2
    & \le
    C_\tau h^{2r_1} \| f\| \, \|g \|
     + C_\tau h^{r_1} ( |u^f|_{H^{r_1}(\Omega)} + |q^f|_{H^{r_1}(\Omega)})
        \;h^{r_0} |u^g|_{H^{r_0+1}(\Omega)}
        \\
    & \le     C_\tau (h^{2r_1} + h^{r_1+r_0})  \| f\| \, \|g \|
  \end{align*}
  by the regularity assumption~\eqref{eq:3}.

  Now consider $t_3$. Since $T$ is a bounded operator in
  $L^2(\Omega)$, by Theorem~\ref{thm:opcgce}, we have
  \begin{equation}
    \label{eq:15}
    \| T_h v \| \le C_\tau \| v \| \qquad\forall v \in L^2(\Omega).
  \end{equation}
  Hence,
  \begin{align*}
    t_3
    &  = (T_h(f - \pw f ), (E_h-E)g)
    \le  C_\tau \| f - \pw f \| \; C h^{r_1} \| g \|
  \end{align*}
  by~\eqref{eq:14} and~\eqref{eq:3}. To control $ f - \pw f $ let us
  first note that $u^f = \mu f$ since $f \in R(E)$. Hence absorbing
  the $\mu$-dependence into $C_\tau$, we can write
  \begin{equation}
    \label{eq:17}
      \| f - \pw f \| \le C_\tau h^{r_1}
      ( |u^f|_{H^{r_1}(\Omega)} + |q^f|_{H^{r_1}(\Omega)}).
  \end{equation}
 Using the regularity assumption~\eqref{eq:3} again, we conclude that
  \[
  t_3 \le C h^{2 r_1} \| f \|\, \|g\|.
  \]

  Finally, consider $t_4$. By   Lemma~\ref{lem:symm:Th},
  $T_h$ is self-adjoint, so
  \begin{align}
    \nonumber
    t_4
    & = (f - \pw f , u_h^g)
    \\ \label{eq:16}
    & = (f - \pw f , u_h^g - u_{h,k-1}^g )
  \end{align}
  where $u_{k-1}^g \in L^2(\Omega)$ is the solution of the HDG method
  using polynomials of degree $k-1$ in place of $k$ for all $k\ge 1$,
  while when $k=0$, we set $u_{k-1}^g=0$.  We may introduce
  $u_{k-1}^g$ in~\eqref{eq:16} because of~\eqref{eq:proj2}. By
  Theorem~\ref{thm:srccgce} and~\eqref{eq:3},
  \[
  \| u_h^g - u_{h,k-1}^g \| \le
   \| u_h^g - u \| + \| u - u_{h,k-1}^g \|
   \le C_\tau h^{r_0} \| g \|.
  \]
  Hence, using also~\eqref{eq:17}, we find that
  \[
  t_4 \le C_\tau h^{r_0+r_1} \| f \|\, \| g \|.
  \]
  Combining the estimates for all $t_i$ to bound the right hand
  side of~\eqref{eq:11}, we find from~\eqref{eq:10} that
  \[
  \| (\hat T - \hat T_h)f \| \le C_\tau h^{r_0+r_1} \| f \|.
  \]
  Using this in~\eqref{eq:9} the proof is finished.
\end{proof}

\section{The hybridized eigenproblem}  \label{sec:hybrid}

The main advantage the HDG method possesses over other DG methods, in
source problems, is that one can eliminate (all) the interior
variables ($u_h^f,\vqf_h$) to obtain a single equation for the
Lagrange multiplier $\eta_h^f$. Since $\eta_h^f$ is defined on mesh
faces of dimension~$n-1$, the reduced system for $\eta_h^f$ can be
significantly smaller in size for high degrees~$k$. It is natural to
ask if this reduction in system size can be carried over to the
eigenproblem. In this section we study this issue. The analysis here
is modeled after~\cite{hrt:2010}.

First, let us review the above mentioned elimination result for the
source problem. Define {\em local solution operators} $\QQ: M_h
 \to  V_h$, $\UU: M_h  \to  W_h$, $\Qw: L^2(\Omega)  \to  V_h$,
$\Uw: L^2(\Omega)  \to  W_h$, element by element, by the following
two systems: For any $K$ in $\oh$,
 \begin{subequations}
 \label{eq:lift}
   \begin{align}
  \label{eq:lift-a}
     ( c\, \QQ\mu,\vr)_{K}
      -
     ( \UU\mu,\dive \vr)_{K}
      & = - \ip{\mu,\vr\cdot\vn}_{\d K}
      &&\text{ for all } \vr \in V_h, \\
 \label{eq:lift-b}
  (w,\dive \QQ\mu)_{K}
     +  \ip{ \tau ( \UU\mu - \mu),  w}_{\d K}
     & =  0
     &&\text{ for all } w \in W_h,
   \end{align}
 \end{subequations}
 and
 \begin{subequations}
 \label{eq:lift:f}
   \begin{align}
  \label{eq:lift-a:f}
     ( c\, \Qw f,\vr)_{K}
      -
     ( \Uw f,\dive \vr)_{K}
      & = 0
      &&\text{ for all } \vr \in V_h, \\
 \label{eq:lift-b:f}
  (w,\dive \Qw f)_{K}
     +  \ip{ \tau \Uw f,  w}_{\d K}
     & =  (f, w)_K
     &&\text{ for all } w \in W_h.
   \end{align}
 \end{subequations}
 The properties of these operators are amply discussed
 in~\cite{CockbGopalLazar09,CockbDuboiGopal10} and will not be
 repeated. Let $a_h(\eta, \mu) = (c\, \QQ\eta, \QQ\mu)_\oh
      + \ip{ \tau (\UU\eta- \eta), (\UU\mu - \mu)}_\doh$
      and $b_h(\mu) = (f, \UU\mu)_\oh$.

 \begin{theorem} [The reduced source problem
   -- \mbox{see~\cite[Theorem~2.1]{CockbGopalLazar09}}]
          \label{thm:reduction:S}
          The functions $\vqf_h \in V_h, \; u_h^f \in W_h,$
  and $ \eta_h^f \in M_h$
  satisfy~\eqref{method} if and only if $\eta_h^f$ is
  the unique function in $M_h$ satisfying
 \begin{gather}
   \label{eq:lam}
   a_h(\eta_h^f, \mu) = b_h(\mu) \Forall \mu \in M_h,
   \\
   \vqf_h  = \QQ\eta_h^f + \Qw f
   \quad\text{ and } \quad
   \label{eq:u-recovery}
   u_h^f  = \UU\eta_h^f + \Uw f.
 \end{gather}
 \end{theorem}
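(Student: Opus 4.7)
The plan is to exploit the element-local nature of equations \eqref{eq:method-a}--\eqref{eq:method-b}: any test functions $\vr \in V_h$ and $w \in W_h$ may be chosen with support in a single element $K$, in which case all inter-element contributions vanish and the equations collapse to element-local relations for $\vqf_h|_K$ and $u_h^f|_K$. Integrating the $-(\vqf_h,\grad w)_K$ term in \eqref{eq:method-b} by parts, the pair $(\vqf_h, u_h^f)|_K$ satisfies a local problem whose right-hand side depends linearly on $(\eta_h^f|_{\partial K}, f|_K)$. By linearity, writing $(\vqf_h, u_h^f) = (\QQ\eta_h^f,\UU\eta_h^f) + (\Qw f,\Uw f)$ recovers exactly \eqref{eq:lift} and \eqref{eq:lift:f}. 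Invertibility of the local solvers follows from the global unique solvability assumption stated below \eqref{eq:qhatf}, which yields the recovery formulas in \eqref{eq:u-recovery}.

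Next, the plan is to substitute \eqref{eq:u-recovery} into \eqref{eq:method-c}, rewritten through \eqref{eq:qhatf}, to obtain
\[
\ip{\mu,(\QQ\eta_h^f + \Qw f)\cdot\vn}_\doh + \ip{\mu,\tau((\UU\eta_h^f - \eta_h^f) + \Uw f)}_\doh = 0.
\]
The key step is to convert the boundary integral $\ip{\mu,\QQ\eta_h^f\cdot\vn}_\doh$ into volume terms by invoking the local solvers with $\mu$ itself as the argument. Testing \eqref{eq:lift-a} (for $\QQ\mu,\UU\mu$) with $\vr = \QQ\eta_h^f$, and testing \eqref{eq:lift-b} (for $\QQ\eta_h^f,\UU\eta_h^f$) with $w = \UU\mu$, then summing over $K\in\Th$, causes the cross term $(\UU\mu,\dive\QQ\eta_h^f)_K$ to cancel and yields
\[
\ip{\mu,\QQ\eta_h^f\cdot\vn}_\doh = -(c\,\QQ\eta_h^f,\QQ\mu)_\oh - \ip{\tau(\UU\eta_h^f-\eta_h^f),\UU\mu}_\doh.
\]
An entirely analogous four-equation manipulation, using \eqref{eq:lift} and \eqref{eq:lift:f} in tandem, converts $\ip{\mu,\Qw f\cdot\vn}_\doh + \ip{\mu,\tau\Uw f}_\doh$ into $(f,\UU\mu)_\oh = b_h(\mu)$. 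Combining these two identities produces exactly $a_h(\eta_h^f,\mu) = b_h(\mu)$.

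The reverse direction reads the forward argument backwards. Given $\eta_h^f \in M_h$ satisfying \eqref{eq:lam}, define $\vqf_h$ and $u_h^f$ by \eqref{eq:u-recovery}. Summing \eqref{eq:lift-a} with \eqref{eq:lift-a:f} over all elements reproduces \eqref{eq:method-a} verbatim; summing \eqref{eq:lift-b} with \eqref{eq:lift-b:f} and integrating $(w,\dive\vqf_h)_K$ by parts reproduces \eqref{eq:method-b}; and reversing the algebraic identity of the previous paragraph shows that \eqref{eq:lam} is equivalent to \eqref{eq:method-c}. Uniqueness of $\eta_h^f$ solving \eqref{eq:lam} is a direct consequence of the global unique solvability of \eqref{method}, and can independently be established from the symmetry and coercivity of $a_h$ on $M_h$ under the standing positivity hypothesis on $\tau$.

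The main obstacle is the bookkeeping in the middle step: each of the four local-solver equations \eqref{eq:lift-a}, \eqref{eq:lift-b}, \eqref{eq:lift-a:f}, \eqref{eq:lift-b:f} must be used in both roles of argument, with test functions chosen so that all divergence cross-terms cancel and only the terms defining $a_h$ and $b_h$ survive. This cancellation reflects a hidden symmetry of the local solver pair $(\QQ,\UU)$, which in particular is the reason $a_h$ turns out to be a symmetric bilinear form; tracking signs and identifying the right substitutions is the only nontrivial bit of the proof.
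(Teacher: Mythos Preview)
Your proposal is correct. The paper does not actually prove this theorem; it is quoted directly from \cite[Theorem~2.1]{CockbGopalLazar09}, and the surrounding text states that ``the properties of these operators are amply discussed in~\cite{CockbGopalLazar09,CockbDuboiGopal10} and will not be repeated.'' Your argument is precisely the standard one given in that reference: localize \eqref{eq:method-a}--\eqref{eq:method-b} to obtain the decomposition \eqref{eq:u-recovery} via linearity of the local solvers, then substitute into \eqref{eq:method-c} and use the local equations \eqref{eq:lift}, \eqref{eq:lift:f} with suitably chosen test functions to cancel the divergence cross-terms and arrive at $a_h(\eta_h^f,\mu)=b_h(\mu)$. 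The bookkeeping you outline (in particular the identity $\ip{\mu,\QQ\eta_h^f\cdot\vn}_\doh + \ip{\tau(\UU\eta_h^f-\eta_h^f),\mu}_\doh = -a_h(\eta_h^f,\mu)$ and its companion for the $f$-part) is exactly right, and your remarks on uniqueness and the reverse direction are accurate.
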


 Our aim is to obtain an analogue of Theorem~\ref{thm:reduction:S} for
 the eigenproblem. To this end, we will need the next lemma.  Let
 \[
 d_{h\tau}^K=1+(\tau_K^\textrm{max}h_K)^{-1/2},\quad
 c_{h\tau}^K=1+(\tau_K^*h_K)^{1/2},
 \quad
 \|\mu\|_{h,K}^2=\|\mu\|_{L^2(\d K)}^2\frac{|K|}{|\d K|}.
 \]
 We assume throughout this section there is a mesh-independent $C_*>0$
 such that
 \[
 (d_{h\tau}^K)^2 h_K \le C_*
 \quad \text{ and }\quad
 c_{h\tau}^K \le C_*.
 \]
 This certainly holds for the most commonly used choice of $\tau
 \equiv 1$.

 \begin{lemma}
   \label{lem:bounds}
   The local solution operators satisfy the following bounds: There
   are positive constants $C_0$ and $C_1$ (independent of $h_K$) such that
    \begin{align}
      \label{eq:Uw:bound}
      \|\Uw f\|_{\LL{L^2(K)}}
      &\leq C_0 (d_{h\tau}^K)^2 h^2_K \|f\|_{\LL{L^2(K)}}
      && \forall \; f\in L^2(K)\\
      \label{eq:U:bound}
      \| \UU \mu \|_{\LL{L^2(K)}}
      & \le C_1 c_{h\tau}^K \| \mu\|_{h,K} && \forall \; \mu\in M_h.
\intertext{Moreover, whenever $0< \kappa< 1/(C_0 (d_{h\tau}^K)^2 h_K^2)$, the operator $I - \kappa \Uw$
is invertible and}
    \label{eq:inversebound}
      \| (I - \kappa \Uw)^{-1} f \|_{\LL{L^2(K)}}
      & \le \frac{1}{1- \kappa C_0 (d_{h\tau}^K)^2 h_K^2} \| f \|_{\LL{L^2(K)}}
      && \forall \; f\in L^2(K).
    \end{align}

\end{lemma}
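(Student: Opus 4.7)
The plan is to handle the three bounds in sequence: (\ref{eq:Uw:bound}) and (\ref{eq:U:bound}) are local energy estimates for the lifting operators, while (\ref{eq:inversebound}) is a Neumann-series argument built on top of (\ref{eq:Uw:bound}). For (\ref{eq:Uw:bound}), I would test (\ref{eq:lift-a:f}) with $\vr=\Qw f$ and (\ref{eq:lift-b:f}) with $w=\Uw f$ and combine them to obtain the local energy identity
\[
(c\,\Qw f,\Qw f)_K + \ip{\tau\,\Uw f,\Uw f}_{\d K} = (f,\Uw f)_K.
\]
Writing $A$ for the square root of the left-hand side, this yields $A^2 \le \|f\|_{L^2(K)} \|\Uw f\|_{L^2(K)}$.

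To convert $A$ back into control over $\|\Uw f\|_{L^2(K)}$, the idea is to use a \Poincare--Friedrichs-type inequality that involves only the face $F_{\max}$, of the form
\[
\|v\|^2_{L^2(K)} \le C\bigl(h_K^2 \|\grad v\|^2_{L^2(K)} + h_K \|v\|^2_{L^2(F_{\max})}\bigr),
\]
so that no use is made of the faces on which $\tau$ may vanish. The gradient term is controlled by testing (\ref{eq:lift-a:f}) with $\vr = \grad \Uw f\in P_{k-1}(K)^n\subset V_h$ (after integration by parts), together with trace inverse estimates, which produces a bound for $\|\grad \Uw f\|_{L^2(K)}$ in terms of $\|\Qw f\|_{L^2(K)}$ and boundary values of $\Uw f$; after a careful balancing using $\tau = \tau_K^{\max}$ on $F_{\max}$, these boundary values contribute the weight $(\tau_K^{\max} h_K)^{-1/2}$. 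Putting it together gives $\|\Uw f\|_{L^2(K)} \le C h_K d_{h\tau}^K A$, which combined with the bound on $A^2$ yields (\ref{eq:Uw:bound}). The proof of (\ref{eq:U:bound}) is analogous: the parallel energy identity for the pair $(\QQ\mu,\UU\mu)$ reads
\[
(c\,\QQ\mu,\QQ\mu)_K + \ip{\tau(\UU\mu-\mu),\UU\mu-\mu}_{\d K} = -\ip{\mu,\qhat\cdot\vn + \tau(\UU\mu-\mu)}_{\d K},
\]
and after bounding the right-hand side by $\|\mu\|_{h,K}$ times the energy (using trace inverse estimates on $\QQ\mu\cdot\vn$ and absorbing the $\tau\mu$ contribution on the faces where $\tau\le\tau_K^*$), the same one-face \Poincare--Friedrichs passage produces the stated bound with $c_{h\tau}^K = 1 + (\tau_K^* h_K)^{1/2}$.

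Finally, (\ref{eq:inversebound}) is a Neumann-series argument. Since $\Uw$ acts elementwise, (\ref{eq:Uw:bound}) identifies $\Uw$ as a bounded operator on $L^2(K)$ with norm at most $C_0 (d_{h\tau}^K)^2 h_K^2$; the hypothesis on $\kappa$ makes $\kappa \Uw$ a strict contraction, so $(I-\kappa\Uw)^{-1}=\sum_{n\ge0}(\kappa\Uw)^n$ converges in operator norm, and geometric summation gives the stated estimate. The main technical obstacle lies in (\ref{eq:Uw:bound}) and (\ref{eq:U:bound}): the stabilization $\tau$ need only be positive on a single face, so the standard \Poincare\ inequality using the full $\d K$ is unavailable, and the sharp scaling in $h_K$, $\tau_K^{\max}$, and $\tau_K^*$ must be tracked carefully to recover precisely the stated $d_{h\tau}^K$ and $c_{h\tau}^K$ dependencies.
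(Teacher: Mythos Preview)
Your Neumann-series argument for (\ref{eq:inversebound}) is exactly what the paper does: from (\ref{eq:Uw:bound}) the operator $\kappa\Uw$ has $L^2(K)$-norm $\gamma=\kappa C_0(d_{h\tau}^K)^2h_K^2<1$, so $I-\kappa\Uw$ is invertible with inverse bounded by $(1-\gamma)^{-1}$. For (\ref{eq:Uw:bound}) and (\ref{eq:U:bound}) the paper does not reprove them but simply cites~\cite{CockbDuboiGopal10}; your energy-identity plus one-face \Poincare--Friedrichs sketch is precisely the strategy used in that reference, so your proposal is consistent with (and more detailed than) the paper's own proof. One minor slip: in your energy identity for $\UU\mu$ the right-hand side should read $-\ip{\mu,\QQ\mu\cdot\vn+\tau(\UU\mu-\mu)}_{\d K}$, not $\qhat\cdot\vn$, but this does not affect the argument.
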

\begin{proof}
  The estimates~\eqref{eq:Uw:bound} and~\eqref{eq:U:bound} are proved
  in~\cite{CockbDuboiGopal10}. To prove~\eqref{eq:inversebound},
  we use~\eqref{eq:Uw:bound}, by which
  \begin{align*}
    \| \kappa \Uw f \|_{\LL{L^2(K)}}
    \le \gamma \| f \|_{\LL{L^2(K)}}.
  \end{align*}
  where $\gamma = \kappa (d_{h\tau}^K)^2 C_0 h_K^2$. By the given
  assumption $\gamma<1$, so the $L^2(\Omega)$-operator norm of $\kappa
  \Uw$ is less than one. Consequently, $I - \kappa \Uw$ is invertible
  and the norm of the inverse is less than $(1-\gamma)^{-1}$.
\end{proof}

\begin{theorem}
  \label{thm:reduction:eig}
  Let $0< C_3 < (C_0 C_*)^{-1}$ and let $\lambda_h$ be any positive
  number less than $C_3/h$. Then
  $I-\lambda_h \Uw$ is invertible, and moreover,
  $\lambda_h$
  satisfies
  \begin{equation}
    \label{eq:m-a-form}
    a_h(\eta_h,\mu) = \lambda_h \,
    ( (I-\lambda_h \Uw)^{-1} \UU\eta_h, \UU\mu)_\oh
  \qquad\forall \mu \in M_h
  \end{equation}
  with some nontrivial $\eta_h$ in $M_h$, if and only if
  the number
  $\lambda_h$ and the functions
  \begin{equation}
    \label{eq:funs}
  \eta_h,\quad
  u_h  = (I - \lambda_h \Uw )^{-1} \UU\eta_h, \quad\text{ and }\quad
  q_h  = \QQ\eta_h + \lambda_h \Qw u_h
  \end{equation}
  together solve the HDG eigenproblem~\eqref{eq:hdgeig}.
\end{theorem}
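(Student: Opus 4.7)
The strategy is to read the HDG eigenproblem~\eqref{eq:hdgeig} as an HDG source problem~\eqref{method} whose ``source'' is $f=\lambda_h u_h$, and then apply the reduction in Theorem~\ref{thm:reduction:S}. The nonlinearity in~\eqref{eq:m-a-form} arises precisely because this source depends on the unknown $u_h$, forcing us to solve locally for $u_h$ in terms of $\eta_h$ using $I-\lambda_h \Uw$. To begin, I would invoke Lemma~\ref{lem:bounds} with $\kappa=\lambda_h$: from $\lambda_h < C_3/h$ and the standing bound $(d_{h\tau}^K)^2 h_K \le C_*$, one obtains
\begin{equation*}
\lambda_h\, C_0 (d_{h\tau}^K)^2 h_K^2
  \le C_0 C_* \lambda_h h
  \le C_0 C_* C_3 < 1,
\end{equation*}
since $C_3 < (C_0 C_*)^{-1}$. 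Hence $I-\lambda_h\Uw$ is invertible elementwise with a mesh-independent bound on the inverse, so the objects appearing in~\eqref{eq:m-a-form} and~\eqref{eq:funs} are well defined.

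For the forward direction, suppose $(\vq_h, u_h, \eta_h)$ together with $\lambda_h$ satisfies~\eqref{eq:hdgeig}. Viewing~\eqref{eq:hdgeig-a}--\eqref{eq:hdgeig-c} as the source equations~\eqref{eq:method-a}--\eqref{eq:method-c} with $f=\lambda_h u_h$, Theorem~\ref{thm:reduction:S} immediately yields the three identities
\begin{equation*}
u_h = \UU\eta_h + \lambda_h \Uw u_h, \quad
\vq_h = \QQ\eta_h + \lambda_h \Qw u_h, \quad
a_h(\eta_h,\mu) = (\lambda_h u_h, \UU\mu)_\oh \Forall \mu \in M_h.
\end{equation*}
The first rearranges to $(I-\lambda_h \Uw)u_h = \UU\eta_h$, which by the invertibility gives $u_h = (I-\lambda_h \Uw)^{-1}\UU\eta_h$ as demanded in~\eqref{eq:funs}. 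Substituting this expression for $u_h$ into the third identity produces exactly~\eqref{eq:m-a-form}.

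For the backward direction, given a nontrivial $\eta_h \in M_h$ satisfying~\eqref{eq:m-a-form}, I would define $u_h$ and $\vq_h$ by~\eqref{eq:funs}; by construction $u_h = \UU\eta_h + \lambda_h \Uw u_h$ and $\vq_h = \QQ\eta_h + \lambda_h \Qw u_h$. Scaling the local source-lift equations~\eqref{eq:lift:f} by $\lambda_h$ and adding them elementwise to~\eqref{eq:lift} produces~\eqref{eq:hdgeig-a} and, after the standard integration-by-parts manipulation used in the source-problem derivation, also~\eqref{eq:hdgeig-b}. For~\eqref{eq:hdgeig-c}, rewriting $(\lambda_h u_h, \UU\mu)_\oh = \lambda_h((I-\lambda_h \Uw)^{-1}\UU\eta_h, \UU\mu)_\oh$ in~\eqref{eq:m-a-form} reduces it to $a_h(\eta_h, \mu) = (\lambda_h u_h, \UU\mu)_\oh$, whereupon the reverse direction of Theorem~\ref{thm:reduction:S} supplies~\eqref{eq:hdgeig-c}. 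Nontriviality of the eigentriple $(\vq_h, u_h, \eta_h)$ is immediate since $\eta_h\neq 0$.

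The only step with real content is the invertibility bound above, and I expect the threshold $\lambda_h < C_3/h$ with $C_3<(C_0C_*)^{-1}$ to be essential there: it is precisely what guarantees that every local $I-\lambda_h \Uw$ is invertible simultaneously with a mesh-independent norm, so that the condensed nonlinear eigenproblem is uniformly well-posed. Beyond this, the argument is largely bookkeeping, invoking Theorem~\ref{thm:reduction:S} in both directions for the self-referential source $f=\lambda_h u_h$.
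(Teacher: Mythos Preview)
Your proposal is correct and follows essentially the same approach as the paper: set $f=\lambda_h u_h$, invoke Theorem~\ref{thm:reduction:S} in both directions, and use Lemma~\ref{lem:bounds} together with the standing bound $(d_{h\tau}^K)^2 h_K\le C_*$ to guarantee invertibility of $I-\lambda_h\Uw$ under the hypothesis $\lambda_h<C_3/h$. The paper's proof is terser (it defers details to~\cite{hrt:2010}), but your expanded argument tracks it step for step.
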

\begin{proof}
  The argument proceeds as in~\cite{hrt:2010}, so we will be brief.
  Setting $f = \lambda_h u_h$ in~\eqref{method} and applying
  Theorem~\ref{thm:reduction:S}, \eqref{eq:u-recovery}, we get $u_h =
  \UU \eta_h + \Uw (\lambda_h u_h)$. Hence,
  \begin{equation}
    \label{eq:20}
      u_h = (I - \lambda_h \Uw)^{-1} \UU \eta_h,
  \end{equation}
  where the inverse exists by Lemma~\ref{lem:bounds}, whenever
  $\lambda_h< 1/(C_0 (d_{h\tau}^K)^2 h_K^2)$.  Note that the later
  inequality is satisfied whenever $\lambda_h < C_3/h$ because $
  \lambda_h C_0 d_{h\tau}^K h_K^2 \le \lambda_h C_0 C_* h_K \le
  \lambda_h C_0 C_* h < C_3 C_0 C_* <1.  $ Now, using~\eqref{eq:20} in
  the right hand side of~\eqref{eq:lam} and proceeding as
  in~\cite{hrt:2010}, the proof is finished.
\end{proof}

Theorem \ref{thm:reduction:eig} is the analogue of
Theorem~\ref{thm:reduction:S} for the eigenproblem.  It shows, roughly
speaking, that the reduced form of the eigenproblem,
namely~\eqref{eq:m-a-form}, does not lose eigenvalues up to
$O(1/h)$. In particular, the physically important lower range of the
spectrum is preserved.  The difficulty with~\eqref{eq:m-a-form} is
that in spite of being a {\em smaller} system~\eqref{eq:hdgeig}, it is
a {\em nonlinear} eigenvalue problem. Fortunately, the result we
present next shows that good initial guesses for the nonlinear
eigenproblem~\eqref{eq:m-a-form} can be calculated by solving a
standard symmetric generalized eigenproblem. This standard eigenproblem
is to find an eigenvalue $\lt_h>0$ and a corresponding eigenfunction
$\et_h\not\equiv 0$ in $M_h$ satisfying
\begin{equation}
  \label{eq:tempt}
  a_h(\et_h, \mu)
   = \lt_h (\UU\et_h,\UU\mu)_\oh \qquad \forall \mu \in M_h.
\end{equation}
and algorithms for solving \eqref{eq:tempt} are well
developed. Although $\lt_h$ may not equal $\lambda_h$, it is a good
approximation to $\lambda_h$ (see Theorem~\ref{thm:init}) and hence
can be used as an initial iterate for iterative algorithms
(such as those studied in~\cite{hrt:2010}) for the nonlinear eigenproblem.

\begin{theorem}
  \label{thm:init}
  Let $0< C_3 < (C_0 C_*)^{-1}$ and consider any eigenvalue
  $\lambda_h$ of~\eqref{eq:hdgeig} satisfying $\lambda_h<C_3/h$.  Then
  there are two constants, $h_0>0$ (depending on $\lambda_h$) and
  $C>0$ (independent of $\lambda_h$), such that for all $h \le h_0$,
  there is an eigenvalue $\lt_h$ of~\eqref{eq:tempt} satisfying
  \begin{equation}
  \label{eq:est:init}
  \frac{ |\lambda_h - \lt_h  |}{\lambda_h }\,
  \le \,\bbchange{C\,\lambda_h  \lt_h \, h}.
  \end{equation}
\end{theorem}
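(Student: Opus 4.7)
The plan is to view the nonlinear eigenproblem \eqref{eq:m-a-form} as a small perturbation of the linear generalized eigenproblem \eqref{eq:tempt} and then invoke the standard perturbation theorem for symmetric eigenvalue problems. The first step is to rewrite \eqref{eq:m-a-form} in a form that isolates the perturbation explicitly. Using the identity $(I-\lambda_h\Uw)^{-1}=I + \lambda_h\Uw(I-\lambda_h\Uw)^{-1}$, and recalling from \eqref{eq:funs} that $u_h=(I-\lambda_h\Uw)^{-1}\UU\eta_h$, the nonlinear eigenproblem becomes
\begin{equation*}
a_h(\eta_h,\mu) \;=\; \lambda_h(\UU\eta_h,\UU\mu)_\oh \;+\; \lambda_h^2(\Uw u_h,\UU\mu)_\oh, \qquad \forall\,\mu\in M_h.
\end{equation*}
Thus $(\lambda_h,\eta_h)$ differs from an eigenpair of the linear problem \eqref{eq:tempt} by a residual whose magnitude is dictated by $\lambda_h^2\Uw$.

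Next, I would convert both problems to standard symmetric eigenvalue problems on a single Hilbert space. The bilinear form $b_h(\eta,\mu) := (\UU\eta,\UU\mu)_\oh$ is symmetric and positive definite on $M_h$; the positive definiteness follows from the unique solvability of the HDG source problem \eqref{method}, which forces $\UU$ to be injective on $M_h$. Using $b_h^{1/2}$ as a Cholesky-type factorization, one transforms \eqref{eq:tempt} into $\mathcal{M}w=\tilde\lambda_h w$ for a symmetric $\mathcal{M}$, while the rewritten nonlinear problem becomes $\mathcal{M}w_h=\lambda_h w_h+\lambda_h^2\mathcal{K}w_h$, where the symmetric perturbation operator $\mathcal{K}$ corresponds to the bilinear form $(\Uw(I-\lambda_h\Uw)^{-1}\UU\cdot,\UU\cdot)_\oh$.

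The third step applies the Weyl-type perturbation result: if $\mathcal{M}w_h=\lambda_h w_h+r_h$ with $\|w_h\|=1$, then there exists an eigenvalue $\tilde\lambda_h$ of $\mathcal{M}$ satisfying $|\tilde\lambda_h-\lambda_h|\le\|r_h\|\le\lambda_h^2\|\mathcal{K}\|_{\mathrm{op}}$. The key quantitative estimate is $\|\mathcal{K}\|_{\mathrm{op}}\le Ch$, obtained via Cauchy-Schwarz together with Lemma~\ref{lem:bounds}: the combination of \eqref{eq:Uw:bound} and the standing assumption $(d_{h\tau}^K)^2h_K\le C_*$ gives $\|\Uw\|_{L^2\to L^2}\le Ch$, and under $\lambda_h<C_3/h$ with $C_3<(C_0C_*)^{-1}$ the bound \eqref{eq:inversebound} gives a uniform bound on $\|(I-\lambda_h\Uw)^{-1}\|$. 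Combining these yields $|\tilde\lambda_h-\lambda_h|\le C\lambda_h^2 h$, which dominates the bound claimed in \eqref{eq:est:init} once $h_0$ is chosen small enough so that $\tilde\lambda_h$ lies within a fixed neighborhood of the fixed number $\lambda_h$ (and in particular is bounded below by a positive constant depending on $\lambda_h$).

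The main obstacle is the symmetric-transformation step, since one must confirm that the residual norm in the standard eigenvalue problem translates cleanly into the bilinear-form bound on $\mathcal{K}$, and that the Weyl-produced $\tilde\lambda_h$ is in fact a nearby eigenvalue and not a far-away one. Both issues are handled by the choice of $h_0$: since $\|r_h\|\to 0$ as $h\to 0$ (with $\lambda_h$ fixed), the eigenvalue of $\mathcal{M}$ closest to $\lambda_h$ must converge to $\lambda_h$, making it the natural candidate for $\tilde\lambda_h$ in the theorem's conclusion.
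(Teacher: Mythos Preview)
Your overall strategy—recast both problems as symmetric eigenproblems on a common space and apply a Weyl/residual perturbation bound—is the same idea the paper uses. But there is a genuine gap in your execution, and it occurs precisely at the step you flag as ``the main obstacle.''

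You choose to symmetrize with respect to the form $b_h(\eta,\mu)=(\UU\eta,\UU\mu)_\oh$ and claim that $b_h$ is positive definite because ``unique solvability of the HDG source problem forces $\UU$ to be injective on $M_h$.'' This inference is incorrect. Unique solvability of~\eqref{method} is equivalent to positive definiteness of $a_h(\cdot,\cdot)$, not to injectivity of $\UU$. Indeed, if $\UU\mu=0$, the triple $(\QQ\mu,0,\mu)$ satisfies the two local equations~\eqref{eq:method-a}--\eqref{eq:method-b} with $f=0$, but there is no reason it satisfies the transmission condition~\eqref{eq:method-c}, so the uniqueness argument does not apply. In fact $\UU$ can have a nontrivial kernel already for $k=0$: on the criss-cross mesh of the unit square (four triangles meeting at the center, $\tau\equiv 1$), the computation $\UU\mu|_K=(\sum_{F\subset\partial K}\tau_F|F|\mu_F)/(\sum_{F\subset\partial K}\tau_F|F|)$ shows that the alternating pattern $\mu=(+1,-1,+1,-1)$ on the four interior half-diagonals lies in $\ker\UU$. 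Hence $b_h$ is only semidefinite, your $b_h^{1/2}$ factorization breaks down, and the operator $\mathcal{M}$ is not well defined as written.

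The paper avoids this difficulty by symmetrizing with the \emph{other} form, $a_h$, which is known to be an inner product by the Poincar\'e-type estimate $\|\mu\|_h^2\le C\,a_h(\mu,\mu)$ of~\cite{CockbDuboiGopal10}. It defines the $a_h$-self-adjoint operators $\tilde S_h$ and $S_h^{(\lambda_h)}$ via $a_h(\tilde S_h\mu,\gamma)=(\UU\mu,\UU\gamma)_\oh$ and $a_h(S_h^{(\lambda_h)}\mu,\gamma)=((I-\lambda_h\Uw)^{-1}\UU\mu,\UU\gamma)_\oh$, whose eigenvalues are $\tilde\lambda_h^{-1}$ and $\lambda_h^{-1}$ respectively, and then applies Weyl's theorem to the difference $S_h^{(\lambda_h)}-\tilde S_h$. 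Bounding that difference requires only the \emph{upper} bounds $\|\UU\mu\|\le C\|\mu\|_h\le C\,a_h(\mu,\mu)^{1/2}$ (which hold), never a lower bound on $\|\UU\mu\|$ (which may fail). Working with the inverse eigenvalues is also what produces the extra factor $\tilde\lambda_h$ in~\eqref{eq:est:init}; your direct residual bound gives $|\lambda_h-\tilde\lambda_h|\le C\lambda_h^2h$, and converting this to the stated estimate with a constant \emph{independent of $\lambda_h$} is not automatic.
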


\begin{proof}
  The idea, as in~\cite{hrt:2010}, is to compare two operators which
  have $\lambda_h$ and $\lt_h$ as eigenvalues. The operators are
  $\Shk:M_h  \to  M_h$ and $\tilde{S}_h$: $M_h  \to  M_h$ defined
  by
  \begin{align*}
    a_h(\Shk\mu,\gamma)
    & = ( (I-\kappa \Uw )^{-1} \UU \mu,\UU\gamma)_\oh
    && \forall\gamma\in M_h,
    \\
    a_h( \tilde{S}_h \mu, \gamma)
    & = (\UU\mu,\UU\gamma)_\oh
    && \forall \gamma \in M_h.
  \end{align*}
  We will use $\Shk$ with $\kappa = \lambda_h$, noting that the
  inverse of $I- \lambda_h \Uw$ appearing in the definition of $\Shl$
  exists since $\lambda_h < C_3/h$ (due to Lemma~\ref{lem:bounds} --
  see also proof of Theorem~\ref{thm:reduction:eig}).  Both operators
  are self-adjoint in the $a_h(\cdot,\cdot)$-inner product: The
  self-adjointness of $\tilde S_h$ is obvious. It is also easy to see
  that $(f, \Uw g)_K = ( c\, \Qw g,\Qw f)_{K} + \ip{ \tau \Uw f, \Uw
    g}_{\d K}$ due to~\eqref{eq:lift}, so $\Uw$ in self-adjoint in the
  $L^2(\Omega)$-inner product. Therefore, $\Shl$ is self-adjoint in
  the $a_h(\cdot,\cdot)$-inner product.

  Now, \eqref{eq:m-a-form} and~\eqref{eq:tempt} imply, respectively,  that
  \[
  \lambda_h^{-1} \in \sigma(\Shl)
  \quad\text{ and }\quad
  \lt_h^{-1} \in \sigma( \tilde S_h),
  \]
  hence, by Weyl's theorem~\cite{Weyl12} on eigenvalues of
  self-adjoint operators, we conclude that
\begin{equation}
  \label{eq:weyl}
  \left|
    \lambda_h^{-1} - \lt_h^{-1}\right|
  \le \| \Shl - \tilde{S}_h \|_a
  \equiv \sup_{0\ne\gamma, \mu\in M_h}
  \frac{ a_h( (\Shl - \tilde{S}_h )\gamma,\mu )}{ a_h(\gamma,\gamma)^{1/2}
    a_h(\mu,\mu)^{1/2}}.
\end{equation}
In the remainder of the proof, we bound the right hand side.

By Lemma~\ref{lem:bounds},
choosing $h_0$ appropriately small,
we have, for all $h \le h_0$,
\begin{align*}
  a_h( (\Shl -\tilde{S}_h)\gamma, \mu)
  & =  ( \lambda_h \Uw ( I- \lambda_h \Uw)^{-1} \LU\gamma, \LU \mu )
  \\
  & \le \frac{\lambda_h C_0 C_* h_K}{ 1 - \lambda_h C_0 C_* h_K}
  \|\UU \gamma \| \, \| \UU \mu \|
  &&\text{ by~\eqref{eq:Uw:bound} and~\eqref{eq:inversebound}},
  \\
  &
  \le C \lambda_h h  \|\UU \gamma \|\, \| \UU \mu \|
  \\
  & \le
  C \lambda_h h  \|\gamma \|_h \| \mu \|_h
  &&\text{ by~\eqref{eq:U:bound}.}
\end{align*}
where $\| \mu \|_h^2=\sum_K \| \mu \|_{h,K}^2$.  We now use the
\Poincare-type estimate~\cite[Theorem~3.4]{CockbDuboiGopal10}, $\| \mu
\|_h^2 \le C a_h(\mu,\mu)$ to conclude that
\[
a_h( (\Shl - \tilde{S}_h)\gamma, \mu) \le C \lambda_h\ h\
a_h(\gamma,\gamma)^{1/2} a_h(\mu,\mu)^{1/2}.
\]
Returning to~\eqref{eq:weyl} and using this estimate, the theorem is proved.
\end{proof}

\section{Numerical experiments}   \label{sec:numer}

In this section, we present numerical results to illustrate the
theoretical results of the previous sections. We consider model
eigenproblems on a square and an L-shaped domain and compute the
spectral approximations using the HDG discretization. The model
problems are the same as those considered in~\cite{hrt:2010} so as to
facilitate comparison with the HRT mixed method. We also present a local
postprocessing technique that enhances the eigenfunction and
eigenvalue accuracy beyond the convergence orders predicted by the
theory. We begin by describing this postprocessing in the next
subsection and present the numerical results in the later
subsections.


\subsection{Local postprocessing}   \label{ssec:postprocessing}

To postprocess the eigenfunction, we are motivated by the theoretical
results that show that the approximate gradient $\vec{q}_h$ converges
at the same order as the approximate eigenfunction~$u_h$.
Accordingly, following~\cite{Stenb91}, we define (element by element)
the postprocessed eigenfunction
$u_h^\ast \in P_{k+1}(K)$ by
\begin{subequations}
  \label{eq:PP}
\begin{align}
  \label{eq:PP1}
  (\grad u_h^\ast, \grad w)_K  & = -(c\,\vq_h, \grad w)_K, \quad
  \forall w \in P_{k+1}(K),\\
  \label{eq:PP2}
  (u^\ast_h, 1)_K & = ( u_h, 1)_K,
\end{align}
\end{subequations}
for all elements $K\in \Th$.  A convergence theory for this
postprocessing (that predicts that $u_h^\ast$ converges at the rate
$O(h^{k+2})$ for $k \ge 1$) is available for solutions of the source
problem~\cite{Stenb91} and for the HRT mixed
eigenproblem~\cite{hrt:2010}.  Next, we define a postprocessed
eigenflux  $\vec{q}_h^{\, \ast}$ as the unique element of $[P_k(K)]^n +
\vec{x} P_k(K)$ satisfying
\begin{equation}
\label{eq49k}
\begin{split}
\langle (\vec{q}^{\, \ast}_h - \widehat{{q}}_h) \cdot \vec{n},\mu \rangle_{F} & = 0, \quad \forall \mu \in P_k(F), \ \forall F \subseteq \partial K, \\
(\vec{q}^{\, \ast}_h - \vec{q}_h  ,\vec{v}\,)_K  & = 0, \quad \forall \vec{v} \in [P_{k-1}(K)]^d
\quad\mbox{ if } k \ge 1,
\end{split}
\end{equation}
for all elements $K\in \Th$. Note that $\vec{q}_h^{\, \ast}$ is
$H(\rm{div},\Omega)$-conforming.

Using this postprocessed eigenfunction and eigenflux, we are now
motivated by the Rayleigh quotient to define the following expression
for computing an  approximate  eigenvalue:
\begin{equation}
\label{eq:post}
\lambda_h^\ast =  \frac{ (\alpha \grad u_h^\ast, \grad u_h^\ast)_{\oh} +   \ip{ \vec{q}_h^{\, \ast} \cdot \vn, u_h^\ast}_{\doh} }{( u_h^\ast, u_h^\ast)_{\oh}} .
\end{equation}
As the numerical results below indicate, this postprocessed eigenvalue
$\lambda_h^\ast$ can be a superior approximation than
$\lambda_h$.

\subsection{Square domain}   \label{ssec:square}

{\footnotesize{
\begin{table}
  \begin{center}
  \scalebox{0.9}{%
    $\begin{array}{|c|c||c c  | c c|  c c|  c c|}
    \hline
  \mbox{degree} & \mbox{mesh} & \multicolumn{2}{|c}{\mbox{first mode}} &  \multicolumn{2}{|c}{\mbox{second mode}} & \multicolumn{2}{|c|}{\mbox{fourth mode}} & \multicolumn{2}{|c|}{\mbox{sixth mode}} \\
    k & \ell & \mbox{error}  & \mbox{order} & \mbox{error}  & \mbox{order} & \mbox{error}  & \mbox{order} & \mbox{error}  & \mbox{order} \\
 \hline
& 0  &  4.49\mbox{e-}2  &  --  &  1.26\mbox{e-}0  &  --  &  3.03\mbox{e-}0  &  --  &  4.58\mbox{e-}0  &  --  \\
 & 1  &  3.45\mbox{e-}2  &  0.38  &  7.11\mbox{e-}1  &  0.82  &  1.86\mbox{e-}0  &  0.70  &  3.00\mbox{e-}0  &  0.61  \\
0 & 2  &  2.05\mbox{e-}2  &  0.75  &  3.82\mbox{e-}1  &  0.90  &  1.07\mbox{e-}0  &  0.79  &  1.78\mbox{e-}0  &  0.75  \\
 & 3  &  1.11\mbox{e-}2  &  0.89  &  1.99\mbox{e-}1  &  0.94  &  5.84\mbox{e-}1  &  0.88  &  9.81\mbox{e-}1  &  0.86  \\
 & 4  &  5.74\mbox{e-}3  &  0.95  &  1.01\mbox{e-}1  &  0.97  &  3.06\mbox{e-}1  &  0.93  &  5.16\mbox{e-}1  &  0.93  \\
\hline
 & 0  &  5.97\mbox{e-}3  &  --  &  1.40\mbox{e-}1  &  --  &  7.35\mbox{e-}1  &  --  &  1.77\mbox{e-}0  &  --  \\
 & 1  &  8.44\mbox{e-}4  &  2.82  &  1.58\mbox{e-}2  &  3.14  &  9.52\mbox{e-}2  &  2.95  &  2.36\mbox{e-}1  &  2.91  \\
1 & 2  &  1.10\mbox{e-}4  &  2.94  &  1.84\mbox{e-}3  &  3.11  &  1.11\mbox{e-}2  &  3.10  &  2.65\mbox{e-}2  &  3.15  \\
 & 3  &  1.39\mbox{e-}5  &  2.98  &  2.20\mbox{e-}4  &  3.06  &  1.32\mbox{e-}3  &  3.07  &  3.08\mbox{e-}3  &  3.10  \\
 & 4  &  1.75\mbox{e-}6  &  2.99  &  2.69\mbox{e-}5  &  3.03  &  1.61\mbox{e-}4  &  3.04  &  3.71\mbox{e-}4  &  3.06  \\
\hline
 & 0  &  1.38\mbox{e-}4  &  --  &  3.65\mbox{e-}3  &  --  &  4.15\mbox{e-}2  &  --  &  1.27\mbox{e-}1  &  --  \\
 & 1  &  4.53\mbox{e-}6  &  4.93  &  1.04\mbox{e-}4  &  5.13  &  1.26\mbox{e-}3  &  5.04  &  3.51\mbox{e-}3  &  5.18  \\
2 & 2  &  1.43\mbox{e-}7  &  4.98  &  3.12\mbox{e-}6  &  5.06  &  3.82\mbox{e-}5  &  5.05  &  1.02\mbox{e-}4  &  5.10  \\
 & 3  &  4.50\mbox{e-}9  &  4.99  &  9.52\mbox{e-}8  &  5.03  &  1.17\mbox{e-}6  &  5.03  &  3.06\mbox{e-}6  &  5.06  \\
 & 4  &  1.41\mbox{e-}10  &  5.00  &  2.94\mbox{e-}9  &  5.02  &  3.60\mbox{e-}8  &  5.02  &  9.37\mbox{e-}8  &  5.03  \\
\hline
 \end{array} $
}
\end{center}{$\phantom{|}$}
     \caption{Convergence of the approximate eigenvalues $\lambda_h$ for $\tau=1$.}
   \label{tab1}
\end{table}
\begin{table}
  \begin{center}
  \scalebox{0.9}{%
    $\begin{array}{|c|c||c c  | c c|  c c|  c c|}
    \hline
  \mbox{degree} & \mbox{mesh} & \multicolumn{2}{|c}{\mbox{first mode}} &  \multicolumn{2}{|c}{\mbox{second mode}} & \multicolumn{2}{|c|}{\mbox{fourth mode}} & \multicolumn{2}{|c|}{\mbox{sixth mode}} \\
    k & \ell & \mbox{error}  & \mbox{order} & \mbox{error}  & \mbox{order} & \mbox{error}  & \mbox{order} & \mbox{error}  & \mbox{order} \\
 \hline
& 0  &  3.42\mbox{e-}1  &  --  &  1.79\mbox{e-}0  &  --  &  3.56\mbox{e-}0  &  --  &  5.23\mbox{e-}0  &  --  \\
 & 1  &  2.04\mbox{e-}1  &  0.75  &  1.13\mbox{e-}0  &  0.66  &  2.42\mbox{e-}0  &  0.55  &  3.69\mbox{e-}0  &  0.50  \\
0 & 2  &  1.09\mbox{e-}1  &  0.90  &  6.40\mbox{e-}1  &  0.82  &  1.48\mbox{e-}0  &  0.71  &  2.28\mbox{e-}0  &  0.70  \\
 & 3  &  5.62\mbox{e-}2  &  0.96  &  3.40\mbox{e-}1  &  0.91  &  8.25\mbox{e-}1  &  0.84  &  1.28\mbox{e-}0  &  0.83  \\
 & 4  &  2.84\mbox{e-}2  &  0.98  &  1.75\mbox{e-}1  &  0.96  &  4.36\mbox{e-}1  &  0.92  &  6.78\mbox{e-}1  &  0.92  \\
\hline
 & 0  &  2.64\mbox{e-}3  &  --  &  2.75\mbox{e-}2  &  --  &  1.93\mbox{e-}1  &  --  &  5.04\mbox{e-}1  &  --  \\
 & 1  &  1.51\mbox{e-}4  &  4.12  &  1.03\mbox{e-}3  &  4.74  &  6.60\mbox{e-}3  &  4.87  &  1.20\mbox{e-}2  &  5.39  \\
1 & 2  &  8.95\mbox{e-}6  &  4.08  &  4.88\mbox{e-}5  &  4.39  &  2.56\mbox{e-}4  &  4.69  &  3.79\mbox{e-}4  &  4.98  \\
 & 3  &  5.42\mbox{e-}7  &  4.05  &  2.65\mbox{e-}6  &  4.20  &  1.18\mbox{e-}5  &  4.44  &  1.57\mbox{e-}5  &  4.60  \\
 & 4  &  3.33\mbox{e-}8  &  4.02  &  1.54\mbox{e-}7  &  4.10  &  6.17\mbox{e-}7  &  4.25  &  7.73\mbox{e-}7  &  4.34  \\
\hline
 & 0  &  5.40\mbox{e-}5  &  --  &  7.03\mbox{e-}4  &  --  &  1.06\mbox{e-}2  &  --  &  2.14\mbox{e-}2  &  --  \\
 & 1  &  8.04\mbox{e-}7  &  6.07  &  9.39\mbox{e-}6  &  6.23  &  1.43\mbox{e-}4  &  6.20  &  2.14\mbox{e-}4  &  6.64  \\
2 & 2  &  1.11\mbox{e-}8  &  6.18  &  1.09\mbox{e-}7  &  6.43  &  2.01\mbox{e-}6  &  6.16  &  2.73\mbox{e-}6  &  6.29  \\
 & 3  &  1.28\mbox{e-}10  &  6.44  &  1.32\mbox{e-}9  &  6.37  &  2.74\mbox{e-}8  &  6.20  &  3.34\mbox{e-}8  &  6.35  \\
 & 4  &  1.56\mbox{e-}12  &  6.35  &  2.19\mbox{e-}11  &  5.91  &  3.3\mbox{e-}10  &  6.38  &  2.88\mbox{e-}10  &  6.86  \\
\hline
 \end{array} $
}
\end{center}{$\phantom{|}$}
     \caption{Convergence of the postprocessed eigenvalues $\lambda_h^{\ast}$ for $\tau=1$.}
   \label{tab1b}
\end{table}
\begin{table}
  \begin{center}
  \scalebox{0.9}{%
    $\begin{array}{|c|c||c c  | c c||  c c|  c c|}
    \hline
    \multicolumn{2}{|c||}{\mbox{eigenmode}} & \multicolumn{4}{|c||}{\mbox{first}} &  \multicolumn{4}{|c|}{\mbox{fourth}} \\
    \hline
  \mbox{degree} & \mbox{mesh} & \multicolumn{2}{|c}{\|u-u_h\|_{\Th}} &  \multicolumn{2}{|c||}{\|u-u_h^\ast\|_{\Th}} & \multicolumn{2}{|c|}{\|u-u_h\|_{\Th}} & \multicolumn{2}{|c|}{\|u-u_h^\ast\|_{\Th}} \\
    k & \ell & \mbox{error}  & \mbox{order} & \mbox{error}  & \mbox{order} & \mbox{error}  & \mbox{order} & \mbox{error}  & \mbox{order} \\
 \hline
& 0  &  4.19\mbox{e-}2  &  --  &  8.65\mbox{e-}2  &  --  &  5.55\mbox{e-}1  &  --  &  6.11\mbox{e-}1  &  --  \\
 & 1  &  1.97\mbox{e-}2  &  1.09  &  2.72\mbox{e-}2  &  1.67  &  1.68\mbox{e-}1  &  1.73  &  1.91\mbox{e-}1  &  1.68  \\
0 & 2  &  9.46\mbox{e-}3  &  1.06  &  1.03\mbox{e-}2  &  1.40  &  6.83\mbox{e-}2  &  1.30  &  7.27\mbox{e-}2  &  1.40  \\
 & 3  &  4.62\mbox{e-}3  &  1.03  &  4.65\mbox{e-}3  &  1.15  &  3.09\mbox{e-}2  &  1.14  &  3.14\mbox{e-}2  &  1.21  \\
 & 4  &  2.28\mbox{e-}3  &  1.02  &  2.27\mbox{e-}3  &  1.04  &  1.47\mbox{e-}2  &  1.07  &  1.47\mbox{e-}2  &  1.09  \\
\hline
 & 0  &  5.02\mbox{e-}2  &  --  &  8.84\mbox{e-}3  &  --  &  3.21\mbox{e-}1  &  --  &  7.84\mbox{e-}2  &  --  \\
 & 1  &  1.26\mbox{e-}2  &  2.00  &  1.14\mbox{e-}3  &  2.96  &  8.10\mbox{e-}2  &  1.99  &  9.23\mbox{e-}3  &  3.09  \\
1 & 2  &  3.14\mbox{e-}3  &  2.00  &  1.44\mbox{e-}4  &  2.98  &  1.92\mbox{e-}2  &  2.07  &  1.13\mbox{e-}3  &  3.03  \\
 & 3  &  7.85\mbox{e-}4  &  2.00  &  1.82\mbox{e-}5  &  2.99  &  4.67\mbox{e-}3  &  2.04  &  1.39\mbox{e-}4  &  3.02  \\
 & 4  &  1.96\mbox{e-}4  &  2.00  &  2.28\mbox{e-}6  &  2.99  &  1.15\mbox{e-}3  &  2.02  &  1.72\mbox{e-}5  &  3.01  \\
\hline
 & 0  &  5.28\mbox{e-}3  &  --  &  8.36\mbox{e-}4  &  --  &  6.98\mbox{e-}2  &  --  &  1.31\mbox{e-}2  &  --  \\
 & 1  &  6.70\mbox{e-}4  &  2.98  &  5.29\mbox{e-}5  &  3.98  &  8.10\mbox{e-}3  &  3.11  &  7.26\mbox{e-}4  &  4.17  \\
2 & 2  &  8.41\mbox{e-}5  &  2.99  &  3.32\mbox{e-}6  &  4.00  &  9.78\mbox{e-}4  &  3.05  &  4.37\mbox{e-}5  &  4.06  \\
 & 3  &  1.05\mbox{e-}5  &  3.00  &  2.07\mbox{e-}7  &  4.00  &  1.20\mbox{e-}4  &  3.02  &  2.70\mbox{e-}6  &  4.02  \\
 & 4  &  1.32\mbox{e-}6  &  3.00  &  1.30\mbox{e-}8  &  4.00  &  1.49\mbox{e-}5  &  3.01  &  1.68\mbox{e-}7  &  4.00  \\
\hline
 \end{array} $
}
\end{center}{$\phantom{|}$}
\caption{Convergence  of the approximate ($u_h$) and postprocessed
  ($u_h^\ast$) eigenfunctions for $\tau=1$.}
   \label{tab1c}
\end{table}
\begin{table}
  \begin{center}
  \scalebox{0.9}{%
    $\begin{array}{|c|c||c c  | c c|  c c|  c c|}
    \hline
  \mbox{degree} & \mbox{mesh} & \multicolumn{2}{|c}{\mbox{first mode}} &  \multicolumn{2}{|c}{\mbox{second mode}} & \multicolumn{2}{|c|}{\mbox{fourth mode}} & \multicolumn{2}{|c|}{\mbox{sixth mode}} \\
    k & \ell & \mbox{error}  & \mbox{order} & \mbox{error}  & \mbox{order} & \mbox{error}  & \mbox{order} & \mbox{error}  & \mbox{order} \\
 \hline
 & 0  &  6.29\mbox{e-}1  &  --  &  2.83\mbox{e-}0  &  --  &  6.65\mbox{e-}0  &  --  &  8.95\mbox{e-}0  &  --  \\
 & 1  &  2.70\mbox{e-}1  &  1.22  &  1.40\mbox{e-}0  &  1.01  &  3.36\mbox{e-}0  &  0.99  &  4.72\mbox{e-}0  &  0.92  \\
0 & 2  &  1.25\mbox{e-}1  &  1.11  &  7.07\mbox{e-}1  &  0.99  &  1.72\mbox{e-}0  &  0.96  &  2.54\mbox{e-}0  &  0.89  \\
 & 3  &  5.99\mbox{e-}2  &  1.06  &  3.56\mbox{e-}1  &  0.99  &  8.85\mbox{e-}1  &  0.96  &  1.34\mbox{e-}0  &  0.92  \\
 & 4  &  2.93\mbox{e-}2  &  1.03  &  1.79\mbox{e-}1  &  0.99  &  4.50\mbox{e-}1  &  0.97  &  6.94\mbox{e-}1  &  0.95  \\
\hline
 & 0  &  6.21\mbox{e-}2  &  --  &  5.26\mbox{e-}1  &  --  &  1.85\mbox{e-}0  &  --  &  3.33\mbox{e-}0  &  --  \\
 & 1  &  1.41\mbox{e-}2  &  2.13  &  1.05\mbox{e-}1  &  2.33  &  3.38\mbox{e-}1  &  2.45  &  6.01\mbox{e-}1  &  2.47  \\
1 & 2  &  3.37\mbox{e-}3  &  2.07  &  2.30\mbox{e-}2  &  2.19  &  6.70\mbox{e-}2  &  2.33  &  1.12\mbox{e-}1  &  2.42  \\
 & 3  &  8.23\mbox{e-}4  &  2.03  &  5.37\mbox{e-}3  &  2.10  &  1.47\mbox{e-}2  &  2.19  &  2.39\mbox{e-}2  &  2.24  \\
 & 4  &  2.03\mbox{e-}4  &  2.02  &  1.30\mbox{e-}3  &  2.05  &  3.44\mbox{e-}3  &  2.10  &  5.48\mbox{e-}3  &  2.12  \\
\hline
 & 0  &  3.23\mbox{e-}2  &  --  &  2.09\mbox{e-}1  &  --  &  5.86\mbox{e-}1  &  --  &  9.78\mbox{e-}1  &  --  \\
 & 1  &  8.11\mbox{e-}3  &  1.99  &  5.10\mbox{e-}2  &  2.03  &  1.32\mbox{e-}1  &  2.15  &  2.09\mbox{e-}1  &  2.23  \\
2 & 2  &  2.04\mbox{e-}3  &  1.99  &  1.28\mbox{e-}2  &  2.00  &  3.28\mbox{e-}2  &  2.02  &  5.13\mbox{e-}2  &  2.03  \\
 & 3  &  5.12\mbox{e-}4  &  1.99  &  3.20\mbox{e-}3  &  2.00  &  8.20\mbox{e-}3  &  2.00  &  1.28\mbox{e-}2  &  2.00  \\
 & 4  &  1.28\mbox{e-}4  &  2.00  &  8.02\mbox{e-}4  &  2.00  &  2.05\mbox{e-}3  &  2.00  &  3.21\mbox{e-}3  &  2.00  \\
\hline
\end{array} $
}
\end{center}{$\phantom{|}$}
     \caption{Convergence of \mbox{$|\lambda_h-\lt_h|$} to 0 for $\tau=1$.}
   \label{tab2}
\end{table}
\begin{table}
  \begin{center}
  \scalebox{0.9}{%
    $\begin{array}{|c|c||c c  | c c|  c c|  c c|}
    \hline
  \mbox{degree} & \mbox{mesh} & \multicolumn{2}{|c}{\mbox{first mode}} &  \multicolumn{2}{|c}{\mbox{second mode}} & \multicolumn{2}{|c|}{\mbox{fourth mode}} & \multicolumn{2}{|c|}{\mbox{sixth mode}} \\
    k & \ell & \mbox{error}  & \mbox{order} & \mbox{error}  & \mbox{order} & \mbox{error}  & \mbox{order} & \mbox{error}  & \mbox{order} \\
 \hline
 & 0  &  1.42\mbox{e-}1  &  --  &  1.72\mbox{e-}0  &  --  &  3.76\mbox{e-}0  &  --  &  5.45\mbox{e-}0  &  --  \\
 & 1  &  3.74\mbox{e-}1  &  -1.39  &  2.01\mbox{e-}0  &  -0.22  &  4.15\mbox{e-}0  &  -0.14  &  5.82\mbox{e-}0  &  -0.09  \\
0 & 2  &  4.33\mbox{e-}1  &  -0.21  &  2.09\mbox{e-}0  &  -0.05  &  4.27\mbox{e-}0  &  -0.04  &  5.91\mbox{e-}0  &  -0.02  \\
 & 3  &  4.48\mbox{e-}1  &  -0.05  &  2.11\mbox{e-}0  &  -0.01  &  4.30\mbox{e-}0  &  -0.01  &  5.93\mbox{e-}0  &  -0.01  \\
 & 4  &  4.52\mbox{e-}1  &  -0.01  &  2.11\mbox{e-}0  &  0.00  &  4.31\mbox{e-}0  &  0.00 &  5.94\mbox{e-}0  &  -0.00 \\
\hline
 & 0  &  1.29\mbox{e-}2  &  --  &  2.45\mbox{e-}1  &  --  &  1.31\mbox{e-}0  &  --  &  2.47\mbox{e-}0  &  --  \\
 & 1  &  4.44\mbox{e-}3  &  1.54  &  9.15\mbox{e-}2  &  1.42  &  5.57\mbox{e-}1  &  1.23  &  1.24\mbox{e-}0  &  0.99  \\
1 & 2  &  1.38\mbox{e-}3  &  1.69  &  2.55\mbox{e-}2  &  1.84  &  1.76\mbox{e-}1  &  1.66  &  4.64\mbox{e-}1  &  1.42  \\
 & 3  &  3.62\mbox{e-}4  &  1.93  &  6.56\mbox{e-}3  &  1.96  &  4.73\mbox{e-}2  &  1.90  &  1.37\mbox{e-}1  &  1.76  \\
 & 4  &  9.15\mbox{e-}5  &  1.98  &  1.65\mbox{e-}3  &  1.99  &  1.20\mbox{e-}2  &  1.97  &  3.60\mbox{e-}2  &  1.93  \\
\hline
 & 0  &  1.81\mbox{e-}4  &  --  &  6.85\mbox{e-}3  &  --  &  7.60\mbox{e-}2  &  --  &  2.19\mbox{e-}1  &  --  \\
 & 1  &  1.57\mbox{e-}5  &  3.53  &  6.21\mbox{e-}4  &  3.46  &  7.18\mbox{e-}3  &  3.40  &  1.89\mbox{e-}2  &  3.54  \\
2 & 2  &  1.25\mbox{e-}6  &  3.65  &  4.24\mbox{e-}5  &  3.87  &  4.93\mbox{e-}4  &  3.87  &  1.26\mbox{e-}3  &  3.90  \\
 & 3  &  8.23\mbox{e-}8  &  3.92  &  2.70\mbox{e-}6  &  3.97  &  3.15\mbox{e-}5  &  3.97  &  8.01\mbox{e-}5  &  3.98  \\
 & 4  &  5.21\mbox{e-}9  &  3.98  &  1.70\mbox{e-}7  &  3.99  &  1.98\mbox{e-}6  &  3.99  &  5.02\mbox{e-}6  &  3.99  \\
\hline
 \end{array} $
}
\end{center}{$\phantom{|}$}
     \caption{Convergence  of the approximate eigenvalues $\lambda_h$ for $\tau=h$.}
   \label{tab3}
\end{table}
\begin{table}
  \begin{center}
  \scalebox{0.9}{%
    $\begin{array}{|c|c||c c  | c c|  c c|  c c|}
    \hline
  \mbox{degree} & \mbox{mesh} & \multicolumn{2}{|c}{\mbox{first mode}} &  \multicolumn{2}{|c}{\mbox{second mode}} & \multicolumn{2}{|c|}{\mbox{fourth mode}} & \multicolumn{2}{|c|}{\mbox{sixth mode}} \\
    k & \ell & \mbox{error}  & \mbox{order} & \mbox{error}  & \mbox{order} & \mbox{error}  & \mbox{order} & \mbox{error}  & \mbox{order} \\
 \hline
& 0  &  2.53\mbox{e-}1  &  --  &  7.42\mbox{e-}1  &  --  &  2.19\mbox{e-}0  &  --  &  3.57\mbox{e-}0  &  --  \\
 & 1  &  5.73\mbox{e-}1  &  -1.18  &  7.76\mbox{e-}1  &  -0.07  &  8.88\mbox{e-}1  &  1.30  &  6.80\mbox{e-}1  &  2.39  \\
0 & 2  &  6.68\mbox{e-}1  &  -0.22  &  1.33\mbox{e-}0  &  -0.78  &  2.18\mbox{e-}0  &  -1.29  &  2.75\mbox{e-}0  &  -2.01  \\
 & 3  &  6.92\mbox{e-}1  &  -0.05  &  1.49\mbox{e-}0  &  -0.16  &  2.56\mbox{e-}0  &  -0.23  &  3.39\mbox{e-}0  &  -0.30  \\
 & 4  &  6.99\mbox{e-}1  &  -0.01  &  1.53\mbox{e-}0  &  -0.04  &  2.66\mbox{e-}0  &  -0.05  &  3.56\mbox{e-}0  &  -0.07  \\
\hline
 & 0  &  1.46\mbox{e-}2  &  --  &  5.70\mbox{e-}2  &  --  &  3.88\mbox{e-}1  &  --  &  2.92\mbox{e-}1  &  --  \\
 & 1  &  5.84\mbox{e-}3  &  1.32  &  1.95\mbox{e-}2  &  1.55  &  5.78\mbox{e-}2  &  2.75  &  9.51\mbox{e-}2  &  1.62  \\
1 & 2  &  1.59\mbox{e-}3  &  1.88  &  6.75\mbox{e-}3  &  1.53  &  2.33\mbox{e-}2  &  1.31  &  3.05\mbox{e-}2  &  1.64  \\
 & 3  &  4.06\mbox{e-}4  &  1.97  &  1.80\mbox{e-}3  &  1.91  &  6.36\mbox{e-}3  &  1.87  &  8.70\mbox{e-}3  &  1.81  \\
 & 4  &  1.02\mbox{e-}4  &  1.99  &  4.57\mbox{e-}4  &  1.98  &  1.62\mbox{e-}3  &  1.97  &  2.24\mbox{e-}3  &  1.96  \\
\hline
 & 0  &  2.78\mbox{e-}4  &  --  &  1.12\mbox{e-}3  &  --  &  1.76\mbox{e-}2  &  --  &  2.47\mbox{e-}2  &  --  \\
 & 1  &  2.55\mbox{e-}5  &  3.45  &  1.82\mbox{e-}4  &  2.62  &  1.11\mbox{e-}3  &  3.99  &  2.10\mbox{e-}3  &  3.57  \\
2 & 2  &  1.72\mbox{e-}6  &  3.89  &  1.51\mbox{e-}5  &  3.59  &  1.02\mbox{e-}4  &  3.44  &  1.42\mbox{e-}4  &  3.87  \\
 & 3  &  1.10\mbox{e-}7  &  3.97  &  1.00\mbox{e-}6  &  3.91  &  6.89\mbox{e-}6  &  3.89  &  9.97\mbox{e-}6  &  3.83  \\
 & 4  &  6.88\mbox{e-}9  &  3.99  &  6.34\mbox{e-}8  &  3.98  &  4.39\mbox{e-}7  &  3.97  &  6.40\mbox{e-}7  &  3.96  \\
\hline
 \end{array} $
}
\end{center}{$\phantom{|}$}
     \caption{Convergence  of the approximate eigenvalues $\lambda_h$ for $\tau=1/h$.}
   \label{tab4}
\end{table}
}}

We consider the domain $\Omega = (0, \pi) \times (0,\pi)$. In this
case, the exact eigenvalues and eigenfunctions are given by
$\lambda^{mn} = m^2 + n^2$ and $u^{mn} = \sin(m x) \sin (nx)$,
respectively, for $m, n \in \mathbb{N}_{+}$. Clearly, the
eigenfunctions are infinitely smooth, so the convergence rates should
be limited only by the degrees of the approximating polynomials.  We
obtain an initial mesh by subdividing $\Omega$ into a uniform grid of
$4 \times 4$ squares and splitting each square into two triangles by
its positively sloped diagonal. Successively finer meshes are obtained
by subdividing each triangle into four smaller triangles. The mesh at
``level $\ell$'' is obtained from the initial mesh by $\ell$
refinements. We compute the solution of the HDG eigenproblem in each
of these meshes. The results obtained are collected below.

In Table~\ref{tab1}, the error and order of convergence of the
approximate eigenvalues for $\tau=1$ is presented. We see that the
approximate eigenvalues $\lambda_h$ converge to the exact values at
the rate of $O(h^{2k+1})$. This is in good agreement with the
theoretical result of Theorem~\ref{thm:ewrate}.  To compare with the
HRT result, see~\cite[Table~1]{hrt:2010}, which shows that the HRT
eigenvalues converge faster at the rate $O(h^{2k+2})$.

Curiously however, we found that the simple local
postprocessing~\eqref{eq:post} can give eigenvalues competitive with
the HRT eigenvalues.  In Table~\ref{tab1b}, we present the error and
order of convergence of the postprocessed eigenvalues
$\lambda_h^\ast$.  There is no change in the convergence order for
$k=0$ as both the approximate and postprocessed eigenvalues converge
linearly.  However, observe that when $k\ge 1$, the postprocessed
eigenvalues converge at a faster rate of $O(h^{2k+2})$.  Presently, we
do not have a rigorous proof for this convergence rate.  This seems to
be the fastest rate we can expect. Indeed, we also observed, in unreported experiments, that when the
postprocessing~\eqref{eq:post} is applied to the HRT method,
no further improvement in the  convergence rate beyond the
$O(h^{2k+2})$-rate was obtained.
 To conclude the discussion on the postprocessing,
see Table~\ref{tab1c}, where the error and order of convergence of the
approximate and postprocessed eigenfunctions is presented. We see that
the convergence rate of the approximate eigenfunctions is
$O(h^{k+1})$, while the convergence rate of the postprocessed
eigenfunctions is $O(h^{k+2})$ for $k \ge 1$ and $O(h^{k+1})$ for
$k=0$. These results illustrate that the local postprocessing is
effective for $k \ge 1$ as it increases the convergence rate of both
the eigenvalue and eigenfunction approximations by one order.

In Table~\ref{tab2}, we display the absolute value of difference
between the approximate eigenvalues $\lambda_h$ and the perturbed
eigenvalues $\lt_h$ of Section~5, namely $|\lambda_h - \lt_h|$. We see
that the numbers $\lt_h$ can serve as good approximations of
$\lambda_h$.  The difference $|\lambda_h - \tilde{\lambda}_h|$ is seen
to decrease with $h$ at the rate $O(h)$ for $k=0$.  This is in
accordance with the estimate of Theorem~\ref{thm:reduction:eig}.
However, the same difference decreases at the rate $O(h^2)$ for $k \ge
1$, which is one order higher than that predicted by
Theorem~\ref{thm:reduction:eig}. These results show that by solving a
standard generalized eigenproblem for $\tilde\lambda_h$, we obtain
very effective initial iterates for nonlinear iterative algorithms to
solve the condensed nonlinear eigenproblem for $\lambda_h$.  At the
same time, we note that $\tilde\lambda_h$ by itself does not converge to the
exact eigenvalue $\lambda$ at as fast a rate as $\lambda_h$.

Next, we examine the performance of the HDG method for different
choices of $\tau$. In particular, we show the error and order of
convergence of the approximate eigenvalues for $\tau=h$ in
Table~\ref{tab3} and for $\tau=1/h$ in Table~\ref{tab4}. We observe in
both cases that the approximate eigenvalues converge at the rate
$O(h^{2k})$. This convergence rate is one order less than the
convergence rate of the approximate eigenvalues for $\tau=1$.  A
theoretical explanation for this phenomena follows
from~\eqref{eq:PIapprx}: When either $\tau=h$ or $\tau = 1/h$, one of
the bounds in~\eqref{eq:PIapprx} deteriorate by one order in $h$. When
these revised estimates are used in the ensuing eigenvalue convergence
analysis, we obtain the reduced $O(h^{2k})$-rate.

\subsection{L-shaped domain}

To study the limitations imposed by singularities of eigenfunctions,
we consider the L-shaped domain $\Omega = \Omega_0 \backslash
\Omega_1$, where $\Omega_0 \equiv (0,2) \times (0,2)$ and $\Omega_1
\equiv (1,2) \times (1,2)$ are the square domains.
This domain has both singular and smooth eigenfunctions, so offers an
interesting example to study the changes in convergence rates due to
singularities.
As before, we consider triangular meshes that are successive uniform refinements of an initial uniform mesh. The initial mesh is obtained as in the previous example using a $4 \times 4$ uniform grid of $\Omega_0$, except we now omit all triangles in $\Omega_1$.

Since $\Omega$ has a reentrant corner at the point $(1,1)$, some eigenfunctions are
singular. Specifically, we may only expect Theorem 4.1 to hold
with $s_{\lambda} = \frac{2}{3}-\varepsilon$ for an arbitrarily small
$\varepsilon>0$ for singular eigenfunctions. For this L-shaped domain the first eigenmode is singular and the corresponding eigenvalue is calculated in \cite{BetckeTrefethen05} as $\lambda_1 = 9.63972384464540$. It is interesting to note that the third eigenmode is smooth and the third eigenvalue is known exactly as $\lambda_3 = 2\pi^2$.

The errors and resulting order of convergence for the approximate and
postprocessed eigenvalues are reported in Table~\ref{tab5} for the
first and third eigenmodes. We observe that the convergence rate of
the approximate smallest eigenvalue is at most $O(h^{4/3})$ which
agrees with the {\it a priori} error estimate given by
Theorem~4.1. Furthermore, the postprocessed smallest eigenvalue also
converges at the same order 4/3 for $k \ge 1$. However, for the
third eigenmode, the approximate eigenvalue converges at order
$O(h^{2k+1})$ and the postprocessed eigenvalue converges at  order
$O(h^{2k+2})$ for $k \ge 1$.

{\footnotesize{
\begin{table}
  \begin{center}
  \scalebox{0.9}{%
    $\begin{array}{|c|c||c c  | c c||  c c|  c c|}
    \hline
    \multicolumn{2}{|c||}{\mbox{eigenmode}} & \multicolumn{4}{|c||}{\mbox{first}} &  \multicolumn{4}{|c|}{\mbox{third}} \\
    \hline
  \mbox{degree} & \mbox{mesh} & \multicolumn{2}{|c}{|\lambda-\lambda_h|} &  \multicolumn{2}{|c||}{|\lambda-\lambda_h^\ast|} & \multicolumn{2}{|c|}{|\lambda - \lambda_h|} & \multicolumn{2}{|c|}{|\lambda - \lambda_h^\ast|} \\
    k & \ell & \mbox{error}  & \mbox{order} & \mbox{error}  & \mbox{order} & \mbox{error}  & \mbox{order} & \mbox{error}  & \mbox{order} \\
 \hline
 & 0  &  3.48\mbox{e-}0  &  --  &  4.20\mbox{e-}0  &  --  &  1.06\mbox{e-}9  &  --  &  1.12\mbox{e-}9  &  --  \\
 & 1  &  2.12\mbox{e-}0  &  0.72  &  2.71\mbox{e-}0  &  0.63  &  7.39\mbox{e-}0  &  0.52  &  8.10\mbox{e-}0  &  0.46  \\
0 & 2  &  1.18\mbox{e-}0  &  0.84  &  1.56\mbox{e-}0  &  0.80  &  4.64\mbox{e-}0  &  0.67  &  5.21\mbox{e-}0  &  0.64  \\
 & 3  &  6.24\mbox{e-}1  &  0.92  &  8.34\mbox{e-}1  &  0.90  &  2.66\mbox{e-}0  &  0.80  &  3.02\mbox{e-}0  &  0.79  \\
 & 4  &  3.19\mbox{e-}1  &  0.97  &  4.29\mbox{e-}1  &  0.96  &  1.44\mbox{e-}0  &  0.89  &  1.64\mbox{e-}0  &  0.88  \\
\hline
 & 0  &  5.04\mbox{e-}1  &  --  &  1.24\mbox{e-}1  &  --  &  4.04\mbox{e-}0  &  --  &  7.98\mbox{e-}1  &  --  \\
 & 1  &  1.02\mbox{e-}1  &  2.31  &  5.93\mbox{e-}2  &  1.07  &  5.68\mbox{e-}1  &  2.83  &  4.43\mbox{e-}2  &  4.17  \\
1 & 2  &  2.82\mbox{e-}2  &  1.85  &  2.36\mbox{e-}2  &  1.33  &  6.19\mbox{e-}2  &  3.20  &  2.35\mbox{e-}3  &  4.24  \\
 & 3  &  9.85\mbox{e-}3  &  1.52  &  9.37\mbox{e-}3  &  1.33  &  7.10\mbox{e-}3  &  3.12  &  1.39\mbox{e-}4  &  4.08  \\
 & 4  &  3.77\mbox{e-}3  &  1.39  &  3.73\mbox{e-}3  &  1.33  &  8.50\mbox{e-}4  &  3.06  &  8.15\mbox{e-}6  &  4.09  \\
\hline
 & 0  &  6.93\mbox{e-}2  &  --  &  5.78\mbox{e-}2  &  --  &  3.03\mbox{e-}1  &  --  &  3.91\mbox{e-}2  &  --  \\
 & 1  &  2.35\mbox{e-}2  &  1.56  &  2.36\mbox{e-}2  &  1.29  &  7.73\mbox{e-}3  &  5.29  &  3.48\mbox{e-}4  &  6.81  \\
2 & 2  &  9.32\mbox{e-}3  &  1.33  &  9.41\mbox{e-}3  &  1.33  &  2.19\mbox{e-}4  &  5.14  &  4.48\mbox{e-}6  &  6.28  \\
 & 3  &  3.73\mbox{e-}3  &  1.32  &  3.75\mbox{e-}3  &  1.33  &  6.50\mbox{e-}6  &  5.07  &  6.13\mbox{e-}8  &  6.19  \\
 & 4  &  1.49\mbox{e-}3  &  1.33  &  1.49\mbox{e-}3  &  1.33  &  1.98\mbox{e-}7  &  5.04  &  7.9\mbox{e-}10  &  6.28  \\
\hline
 \end{array} $
}
\end{center}{$\phantom{|}$}
\caption{Convergence  of the approximate ($\lambda_h$) and postprocessed
  ($\lambda_h^\ast$) eigenvalues for the L-shaped domain problem.}
   \label{tab5}
\end{table}
}}

\subsection{Other polynomial spaces}
\label{ssec:extension}

So far the presentation focused on the case when the same polynomial
degree $k$ is employed to approximate the solution, the flux, and the
trace. It is interesting to examine the case of mixed degrees. Since
$M_h$ determines the size of the global system, let us hold $M_h$
fixed as set in~\eqref{eq:M} consisting of functions of degree $k$ on
the element interfaces, while varying the degrees of $W_h$ and $V_h$
as follows:

Case 1: $[\tau]_K\ge 0$ on $\d K$ for all $K\in\Th$, $k\ge 1$,  and
\begin{align*}
  W_h &= \{ w: \text{ for every mesh element }K, w|_K \in P_{k-1}(K)\},
  \\
  V_h &=  \{ \vv: \text{ for every mesh element }K, \vv|_K \in P_k(K)^n\}.
\end{align*}

Case 2: $[\tau]_K> 0$ on $\d K$ for all $K\in\Th$, $k\ge 1$, and
\begin{align*}
  W_h &= \{ w: \text{ for every mesh element }K, w|_K \in P_{k}(K)\},
  \\
  V_h &=  \{ \vv: \text{ for every mesh element }K, \vv|_K \in P_{k-1}(K)^n\}.
\end{align*}

These cases are interesting because the source problem~\eqref{method}
is uniquely solvable~\cite[see eq.~(3.5) and
Proposition~3.2]{CockbGopalLazar09}.  One can now follow the same
techniques in~\cite[Lemma 3.2 and Theorem 3.1]{CockbDuboiGopal10} to
obtain a bound for the norm of the operator $\Uw$, leading to results
analogous to Theorem~\ref{thm:reduction:eig} for the reduced
eigenproblem in both these cases.  On the other hand, rigorous proofs
of eigenvalue convergence rates for general $\tau$ are yet to be
developed for these cases: Ingredients (of Section~\ref{sec:rate})
that need generalization include the projection
satisfying~\eqref{eq:PIapprx} and the $\tau$-explicit
estimates~\eqref{eq:ee-u}--\eqref{eq:ee-q}.

Nevertheless, numerical results are not encouraging in these cases.
Returning to the eigenproblem on the square described in
\S\ref{ssec:square}, the convergence of approximate and postprocessed
eigenvalues in both cases, obtained with $\tau=1$, are presented in
Tables~\ref{tab4b}--\ref{tab4e}.  We observe that the approximate
eigenvalues converge at the rate $O(h^{2k-1})$ in both cases.  It also
appears that the postprocessed eigenvalues converge at the rate
$O(h^{2k})$, except in Case~2 with $k=1$, where $\lambda_h^\ast$
converges at the same rate as $\lambda_h$. Comparing the results from
approximation spaces of mixed and equal degree, we find that the
equal-degree spaces give two orders faster convergence rate, so is
clearly preferable in the eigenvalue context.

The special case of $\tau \equiv 0$ deserves further remarks. In this
case, if $V_h$ is changed to the (larger) piecewise Raviart-Thomas
space, then the resulting HDG eigenproblem is the same as the
hybridized Raviart-Thomas eigenproblem. This case is fully studied
in~\cite{hrt:2010}. Next, consider the special case of $\tau \equiv 0$
in Case~1. Then, the HDG formulation reduces to the (hybridized) mixed
Brezzi-Douglas-Marini (BDM) eigenproblem, for which a complete
convergence theory is available in~\cite{BoffiBrezzGasta00}.  In
particular, it follows from their results that the BDM eigenvalue
convergence rate is $O(h^{2k})$. However, the postprocessing of
\S\ref{ssec:postprocessing} provides a way to make the BDM eigenvalues
more competitive: We observe, in Table \ref{tab4f}, that postprocessed
BDM eigenvalues converge at $O(h^{2k+2}),$ the rate also observed for
the HRT and HDG eigenvalues after the same postprocessing.

We conclude with a summary of the convergence rates in
Table~\ref{tab:summary}. Its entries are based on the observed and
known convergence rates in various cases for the first eigenpair of
the square domain example in \S\ref{ssec:square}. Note that, as
before, the convergence of all functions are measured in the
$L^2(\Omega)$-norm.

{\footnotesize{
\begin{table}[htbp]
  \centering
  \begin{tabular}{|l||ccccc|}
    \hline
      \multicolumn{1}{|c||}{Method}
      &
      \multicolumn{5}{c|}{Convergence rates}
      \\ \cline{2-6}
      &$\lambda_h$&$\lambda_h^*$& $\vq_h$ & $u_h$  & $u_h^*$
      \\ \hline
      HDG, equal degree $k\ge 0, \tau=1$
      & $2k+1$   &$2k+2$       &   $k+1$   &  $k+1$ & $k+2-\delta_{k0}$
      \\
      HDG, equal degree $k\ge 1, \tau=h$
      & $2k$     &     $ 2k+2$      &    $k+1$   &    $k$    & $k+2$
      \\
      HDG, equal degree $k\ge 1, \tau=1/h$
      & $2k$     &      $2k$       &    $k$   &   $k+1$   &  $k+1$
      \\
      HDG, Case~1, $k\ge 1$, $\tau=1$
      & $2k-1$   &  $2k$       &    $k$   &   $k$    &  $k+1$
      \\
      HDG, Case~2, $k\ge 1$, $\tau=1$
      & $2k-1$ &$2k-\delta_{k1}$&    $k$   &  $k$     & $k+1$
      \\
      Interior penalty DG~\cite{AntonBuffaPerug06}, $k \ge 1$
      & $2k$   & --             & --   & $k+1$        & --
      \\
     HRT~\cite{hrt:2010} ($k \ge 0$, $\tau = 0$)
      & $2k+2$   &$2k+2$       &   $k+1$    &  $k+1$     &  $k+2$
      \\
      BDM  (Case~1, $k \ge 1$, $\tau = 0$)
      & $2k$ & $2k+2$       &  $k+1$      &  $k$      & $k+2$
      \\
      \hline
  \end{tabular}
    \vspace{0.2in}
  \caption{Summary of convergence rates for smooth eigenfunctions}
  \label{tab:summary}
\end{table}
}}

{\footnotesize{
\begin{table}
  \begin{center}
  \scalebox{0.9}{%
    $\begin{array}{|c|c||c c  | c c|  c c|  c c|}
    \hline
  \mbox{degree} & \mbox{mesh} & \multicolumn{2}{|c}{\mbox{first mode}} &  \multicolumn{2}{|c}{\mbox{second mode}} & \multicolumn{2}{|c|}{\mbox{fourth mode}} & \multicolumn{2}{|c|}{\mbox{sixth mode}} \\
    k & \ell & \mbox{error}  & \mbox{order} & \mbox{error}  & \mbox{order} & \mbox{error}  & \mbox{order} & \mbox{error}  & \mbox{order} \\
\hline
& 0  &  1.12\mbox{e-}0  &  --  &  2.68\mbox{e-}0  &  --  &  4.90\mbox{e-}0  &  --  &  7.05\mbox{e-}0  &  --  \\
 & 1  &  5.78\mbox{e-}1  &  0.96  &  1.33\mbox{e-}0  &  1.01  &  2.50\mbox{e-}0  &  0.97  &  3.64\mbox{e-}0  &  0.95  \\
1 & 2  &  2.93\mbox{e-}1  &  0.98  &  6.53\mbox{e-}1  &  1.03  &  1.23\mbox{e-}0  &  1.03  &  1.67\mbox{e-}0  &  1.12  \\
 & 3  &  1.48\mbox{e-}1  &  0.99  &  3.23\mbox{e-}1  &  1.02  &  6.05\mbox{e-}1  &  1.02  &  7.91\mbox{e-}1  &  1.08  \\
 & 4  &  7.41\mbox{e-}2  &  0.99  &  1.60\mbox{e-}1  &  1.01  &  3.00\mbox{e-}1  &  1.01  &  3.84\mbox{e-}1  &  1.04  \\
\hline
 & 0  &  3.41\mbox{e-}2  &  --  &  1.54\mbox{e-}1  &  --  &  5.39\mbox{e-}1  &  --  &  8.13\mbox{e-}1  &  --  \\
 & 1  &  4.72\mbox{e-}3  &  2.85  &  2.13\mbox{e-}2  &  2.85  &  7.93\mbox{e-}2  &  2.76  &  1.12\mbox{e-}1  &  2.85  \\
2 & 2  &  6.18\mbox{e-}4  &  2.93  &  2.76\mbox{e-}3  &  2.95  &  1.03\mbox{e-}2  &  2.94  &  1.43\mbox{e-}2  &  2.98  \\
 & 3  &  7.90\mbox{e-}5  &  2.97  &  3.49\mbox{e-}4  &  2.98  &  1.30\mbox{e-}3  &  2.99  &  1.78\mbox{e-}3  &  3.00  \\
 & 4  &  9.99\mbox{e-}6  &  2.98  &  4.38\mbox{e-}5  &  2.99  &  1.62\mbox{e-}4  &  3.00  &  2.22\mbox{e-}4  &  3.01  \\
\hline
 & 0  &  5.81\mbox{e-}4  &  --  &  4.88\mbox{e-}3  &  --  &  3.56\mbox{e-}2  &  --  &  5.23\mbox{e-}2  &  --  \\
 & 1  &  1.99\mbox{e-}5  &  4.87  &  1.76\mbox{e-}4  &  4.80  &  1.31\mbox{e-}3  &  4.76  &  1.87\mbox{e-}3  &  4.80  \\
3 & 2  &  6.44\mbox{e-}7  &  4.95  &  5.68\mbox{e-}6  &  4.95  &  4.25\mbox{e-}5  &  4.95  &  5.97\mbox{e-}5  &  4.97  \\
 & 3  &  2.04\mbox{e-}8  &  4.98  &  1.79\mbox{e-}7  &  4.99  &  1.33\mbox{e-}6  &  4.99  &  1.86\mbox{e-}6  &  5.00  \\
 & 4  &  6.43\mbox{e-}10  &  4.99  &  1.13\mbox{e-}8  &  3.99  &  4.16\mbox{e-}8  &  5.00  &  5.78\mbox{e-}8  &  5.01  \\
\hline
 \end{array} $
}
\end{center}{$\phantom{|}$}
     \caption{{Convergence  of the approximate eigenvalues $\lambda_h$ for (the mixed-degree) Case 1 with $\tau = 1$.
       }}
   \label{tab4b}
\end{table}
\begin{table}
  \begin{center}
  \scalebox{0.9}{%
    $\begin{array}{|c|c||c c  | c c|  c c|  c c|}
    \hline
  \mbox{degree} & \mbox{mesh} & \multicolumn{2}{|c}{\mbox{first mode}} &  \multicolumn{2}{|c}{\mbox{second mode}} & \multicolumn{2}{|c|}{\mbox{fourth mode}} & \multicolumn{2}{|c|}{\mbox{sixth mode}} \\
    k & \ell & \mbox{error}  & \mbox{order} & \mbox{error}  & \mbox{order} & \mbox{error}  & \mbox{order} & \mbox{error}  & \mbox{order} \\
\hline
 & 0  &  1.20\mbox{e-}1  &  --  &  3.54\mbox{e-}1  &  --  &  7.98\mbox{e-}1  &  --  &  1.06\mbox{e-}0  &  --  \\
 & 1  &  3.74\mbox{e-}2  &  1.68  &  9.45\mbox{e-}2  &  1.90  &  2.57\mbox{e-}1  &  1.64  &  1.98\mbox{e-}1  &  2.43  \\
1 & 2  &  1.07\mbox{e-}2  &  1.81  &  2.51\mbox{e-}2  &  1.91  &  7.70\mbox{e-}2  &  1.74  &  3.22\mbox{e-}2  &  2.62  \\
 & 3  &  2.88\mbox{e-}3  &  1.89  &  6.55\mbox{e-}3  &  1.94  &  2.11\mbox{e-}2  &  1.86  &  6.26\mbox{e-}3  &  2.36  \\
 & 4  &  7.51\mbox{e-}4  &  1.94  &  1.68\mbox{e-}3  &  1.96  &  5.52\mbox{e-}3  &  1.94  &  1.39\mbox{e-}3  &  2.17  \\
\hline
 & 0  &  5.98\mbox{e-}3  &  --  &  2.80\mbox{e-}2  &  --  &  1.13\mbox{e-}1  &  --  &  1.50\mbox{e-}1  &  --  \\
 & 1  &  4.86\mbox{e-}4  &  3.62  &  2.22\mbox{e-}3  &  3.65  &  8.95\mbox{e-}3  &  3.66  &  1.21\mbox{e-}2  &  3.63  \\
2 & 2  &  3.46\mbox{e-}5  &  3.81  &  1.54\mbox{e-}4  &  3.85  &  6.03\mbox{e-}4  &  3.89  &  8.41\mbox{e-}4  &  3.85  \\
 & 3  &  2.31\mbox{e-}6  &  3.91  &  1.00\mbox{e-}5  &  3.94  &  3.87\mbox{e-}5  &  3.96  &  5.40\mbox{e-}5  &  3.96  \\
 & 4  &  1.49\mbox{e-}7  &  3.95  &  6.39\mbox{e-}7  &  3.97  &  2.44\mbox{e-}6  &  3.99  &  3.39\mbox{e-}6  &  3.99  \\
\hline
 & 0  &  7.95\mbox{e-}5  &  --  &  6.94\mbox{e-}4  &  --  &  6.15\mbox{e-}3  &  --  &  8.36\mbox{e-}3  &  --  \\
 & 1  &  1.42\mbox{e-}6  &  5.80  &  1.25\mbox{e-}5  &  5.79  &  1.05\mbox{e-}4  &  5.87  &  1.46\mbox{e-}4  &  5.84  \\
3 & 2  &  2.37\mbox{e-}8  &  5.91  &  2.04\mbox{e-}7  &  5.94  &  1.64\mbox{e-}6  &  6.00  &  2.24\mbox{e-}6  &  6.02  \\
 & 3  &  3.77\mbox{e-}10  &  5.97  &  3.23\mbox{e-}9  &  5.98  &  2.55\mbox{e-}8  &  6.01  &  3.45\mbox{e-}8  &  6.02  \\
 & 4  &  6.08\mbox{e-}12  &  5.95  &  5.04\mbox{e-}11  &  6.00  &  3.77\mbox{e-}10  &  6.08  &  5.14\mbox{e-}10  &  6.07  \\
\hline
 \end{array} $
}
\end{center}{$\phantom{|}$}
     \caption{{Convergence  of the postprocessed eigenvalues $\lambda_h^\ast$ for (the mixed-degree) Case 1 with $\tau = 1$. 
       }}
   \label{tab4c}
\end{table}
\begin{table}
  \begin{center}
  \scalebox{0.9}{%
    $\begin{array}{|c|c||c c  | c c|  c c|  c c|}
    \hline
  \mbox{degree} & \mbox{mesh} & \multicolumn{2}{|c}{\mbox{first mode}} &  \multicolumn{2}{|c}{\mbox{second mode}} & \multicolumn{2}{|c|}{\mbox{fourth mode}} & \multicolumn{2}{|c|}{\mbox{sixth mode}} \\
    k & \ell & \mbox{error}  & \mbox{order} & \mbox{error}  & \mbox{order} & \mbox{error}  & \mbox{order} & \mbox{error}  & \mbox{order} \\
 \hline
& 0  &  3.89\mbox{e-}1  &  --  &  1.92\mbox{e-}0  &  --  &  3.88\mbox{e-}0  &  --  &  6.18\mbox{e-}0  &  --  \\
 & 1  &  2.15\mbox{e-}1  &  0.85  &  1.19\mbox{e-}0  &  0.69  &  2.61\mbox{e-}0  &  0.57  &  3.88\mbox{e-}0  &  0.67  \\
1 & 2  &  1.12\mbox{e-}1  &  0.94  &  6.60\mbox{e-}1  &  0.85  &  1.54\mbox{e-}0  &  0.76  &  2.35\mbox{e-}0  &  0.72  \\
 & 3  &  5.73\mbox{e-}2  &  0.97  &  3.47\mbox{e-}1  &  0.93  &  8.45\mbox{e-}1  &  0.87  &  1.31\mbox{e-}0  &  0.85  \\
 & 4  &  2.89\mbox{e-}2  &  0.99  &  1.78\mbox{e-}1  &  0.96  &  4.44\mbox{e-}1  &  0.93  &  6.89\mbox{e-}1  &  0.92  \\
\hline
 & 0  &  1.75\mbox{e-}2  &  --  &  3.33\mbox{e-}1  &  --  &  1.19\mbox{e-}0  &  --  &  2.18\mbox{e-}0  &  --  \\
 & 1  &  2.19\mbox{e-}3  &  3.00  &  4.28\mbox{e-}2  &  2.96  &  1.64\mbox{e-}1  &  2.86  &  3.39\mbox{e-}1  &  2.68  \\
2 & 2  &  2.69\mbox{e-}4  &  3.03  &  5.08\mbox{e-}3  &  3.08  &  1.88\mbox{e-}2  &  3.12  &  3.83\mbox{e-}2  &  3.15  \\
 & 3  &  3.32\mbox{e-}5  &  3.02  &  6.12\mbox{e-}4  &  3.05  &  2.23\mbox{e-}3  &  3.08  &  4.44\mbox{e-}3  &  3.11  \\
 & 4  &  4.12\mbox{e-}6  &  3.01  &  7.50\mbox{e-}5  &  3.03  &  2.70\mbox{e-}4  &  3.04  &  5.32\mbox{e-}4  &  3.06  \\
\hline
& 0  &  2.78\mbox{e-}4  &  --  &  8.57\mbox{e-}3  &  --  &  8.97\mbox{e-}2  &  --  &  2.19\mbox{e-}1  &  --  \\
 & 1  &  8.71\mbox{e-}6  &  4.99  &  2.52\mbox{e-}4  &  5.09  &  2.52\mbox{e-}3  &  5.15  &  5.67\mbox{e-}3  &  5.27  \\
3 & 2  &  2.70\mbox{e-}7  &  5.01  &  7.54\mbox{e-}6  &  5.06  &  7.36\mbox{e-}5  &  5.10  &  1.60\mbox{e-}4  &  5.15  \\
 & 3  &  8.36\mbox{e-}9  &  5.01  &  2.30\mbox{e-}7  &  5.04  &  2.21\mbox{e-}6  &  5.06  &  4.75\mbox{e-}6  &  5.08  \\
 & 4  &  2.6\mbox{e-}10  &  5.01  &  7.08\mbox{e-}9  &  5.02  &  6.77\mbox{e-}8  &  5.03  &  1.44\mbox{e-}7  &  5.04  \\
\hline
 \end{array} $
}
\end{center}{$\phantom{|}$}
     \caption{Convergence  of the approximate eigenvalues $\lambda_h$ for (the mixed-degree) Case 2 with $\tau = 1$. 
     }
   \label{tab4d}
\end{table}
\begin{table}
  \begin{center}
  \scalebox{0.9}{%
    $\begin{array}{|c|c||c c  | c c|  c c|  c c|}
    \hline
  \mbox{degree} & \mbox{mesh} & \multicolumn{2}{|c}{\mbox{first mode}} &  \multicolumn{2}{|c}{\mbox{second mode}} & \multicolumn{2}{|c|}{\mbox{fourth mode}} & \multicolumn{2}{|c|}{\mbox{sixth mode}} \\
    k & \ell & \mbox{error}  & \mbox{order} & \mbox{error}  & \mbox{order} & \mbox{error}  & \mbox{order} & \mbox{error}  & \mbox{order} \\
 \hline
& 0  &  3.77\mbox{e-}1  &  --  &  1.87\mbox{e-}0  &  --  &  3.80\mbox{e-}0  &  --  &  6.24\mbox{e-}0  &  --  \\
 & 1  &  2.13\mbox{e-}1  &  0.83  &  1.17\mbox{e-}0  &  0.68  &  2.56\mbox{e-}0  &  0.57  &  3.83\mbox{e-}0  &  0.70  \\
1 & 2  &  1.12\mbox{e-}1  &  0.93  &  6.55\mbox{e-}1  &  0.83  &  1.53\mbox{e-}0  &  0.74  &  2.33\mbox{e-}0  &  0.72  \\
 & 3  &  5.72\mbox{e-}2  &  0.97  &  3.46\mbox{e-}1  &  0.92  &  8.42\mbox{e-}1  &  0.86  &  1.30\mbox{e-}0  &  0.84  \\
 & 4  &  2.89\mbox{e-}2  &  0.99  &  1.78\mbox{e-}1  &  0.96  &  4.43\mbox{e-}1  &  0.93  &  6.89\mbox{e-}1  &  0.92  \\
\hline
 & 0  &  2.26\mbox{e-}4  &  --  &  1.95\mbox{e-}3  &  --  &  1.01\mbox{e-}2  &  --  &  2.36\mbox{e-}2  &  --  \\
 & 1  &  3.18\mbox{e-}5  &  2.83  &  5.02\mbox{e-}4  &  1.96  &  1.66\mbox{e-}3  &  2.61  &  3.05\mbox{e-}3  &  2.95  \\
2 & 2  &  2.58\mbox{e-}6  &  3.62  &  4.48\mbox{e-}5  &  3.48  &  1.34\mbox{e-}4  &  3.63  &  2.15\mbox{e-}4  &  3.83  \\
 & 3  &  1.80\mbox{e-}7  &  3.84  &  3.20\mbox{e-}6  &  3.81  &  1.08\mbox{e-}5  &  3.62  &  1.52\mbox{e-}5  &  3.82  \\
 & 4  &  1.18\mbox{e-}8  &  3.93  &  2.12\mbox{e-}7  &  3.92  &  7.40\mbox{e-}7  &  3.87  &  1.01\mbox{e-}6  &  3.92  \\
\hline
& 0  &  4.92\mbox{e-}6  &  --  &  1.20\mbox{e-}4  &  --  &  3.98\mbox{e-}3  &  --  &  1.11\mbox{e-}2  &  --  \\
 & 1  &  4.93\mbox{e-}8  &  6.64  &  7.34\mbox{e-}7  &  7.35  &  1.77\mbox{e-}5  &  7.82  &  7.32\mbox{e-}5  &  7.25  \\
3 & 2  &  5.53\mbox{e-}10  &  6.48  &  5.59\mbox{e-}9  &  7.04  &  1.44\mbox{e-}7  &  6.94  &  8.61\mbox{e-}7  &  6.41  \\
 & 3  &  5.63\mbox{e-}12  &  6.62  &  4.44\mbox{e-}11  &  6.97  &  1.69\mbox{e-}9  &  6.41  &  1.25\mbox{e-}8  &  6.11  \\
 & 4  &  1.04\mbox{e-}13  &  5.76  &  4.01\mbox{e-}13  &  6.79  &  2.40\mbox{e-}11  &  6.14  &  1.71\mbox{e-}10  &  6.19  \\
\hline
 \end{array} $
}
\end{center}{$\phantom{|}$}
     \caption{Convergence  of the postprocessed eigenvalues $\lambda_h^\ast$ for (the mixed-degree) Case 2 with $\tau = 1$. 
       }
   \label{tab4e}
\end{table}
\begin{table}
  \begin{center}
  \scalebox{0.9}{%
    $\begin{array}{|c|c||c c  | c c|  c c|  c c|}
    \hline
  \mbox{degree} & \mbox{mesh} & \multicolumn{2}{|c}{\mbox{first mode}} &  \multicolumn{2}{|c}{\mbox{second mode}} & \multicolumn{2}{|c|}{\mbox{fourth mode}} & \multicolumn{2}{|c|}{\mbox{sixth mode}} \\
    k & \ell & \mbox{error}  & \mbox{order} & \mbox{error}  & \mbox{order} & \mbox{error}  & \mbox{order} & \mbox{error}  & \mbox{order} \\
\hline
& 0  &  8.30\mbox{e-}4  &  --  &  2.40\mbox{e-}2  &  --  &  1.35\mbox{e-}1  &  --  &  2.32\mbox{e-}1  &  --  \\
 & 1  &  3.33\mbox{e-}5  &  4.64  &  1.17\mbox{e-}3  &  4.35  &  9.96\mbox{e-}3  &  3.76  &  1.47\mbox{e-}2  &  3.98  \\
1 & 2  &  1.65\mbox{e-}6  &  4.33  &  6.63\mbox{e-}5  &  4.15  &  6.43\mbox{e-}4  &  3.95  &  9.44\mbox{e-}4  &  3.96  \\
 & 3  &  9.31\mbox{e-}8  &  4.15  &  3.98\mbox{e-}6  &  4.06  &  4.02\mbox{e-}5  &  4.00  &  5.92\mbox{e-}5  &  4.00  \\
\hline
 & 0  &  6.54\mbox{e-}5  &  --  &  1.37\mbox{e-}3  &  --  &  1.33\mbox{e-}2  &  --  &  2.03\mbox{e-}2  &  --  \\
 & 1  &  1.12\mbox{e-}6  &  5.86  &  2.62\mbox{e-}5  &  5.71  &  2.71\mbox{e-}4  &  5.62  &  4.76\mbox{e-}4  &  5.41  \\
2 & 2  &  1.80\mbox{e-}8  &  5.97  &  4.31\mbox{e-}7  &  5.93  &  4.53\mbox{e-}6  &  5.90  &  8.60\mbox{e-}6  &  5.79  \\
 & 3  &  2.82\mbox{e-}10  &  5.99  &  6.82\mbox{e-}9  &  5.98  &  7.21\mbox{e-}8  &  5.98  &  1.39\mbox{e-}7  &  5.95  \\
\hline
 & 0  &  5.73\mbox{e-}7  &  --  &  1.97\mbox{e-}5  &  --  &  5.21\mbox{e-}4  &  --  &  7.91\mbox{e-}4  &  --  \\
 & 1  &  2.32\mbox{e-}9  &  7.95  &  2.30\mbox{e-}7  &  6.42  &  2.33\mbox{e-}6  &  7.80  &  3.66\mbox{e-}6  &  7.75  \\
3 & 2  &  7.82\mbox{e-}12  &  8.21  &  9.08\mbox{e-}10  &  7.99  &  9.36\mbox{e-}9  &  7.96  &  1.47\mbox{e-}8  &  7.96  \\
 & 3  &  3.12\mbox{e-}14  &  7.97  &  1.89\mbox{e-}12  &  8.91  &  3.15\mbox{e-}11  &  8.22  &  5.24\mbox{e-}11  &  8.13  \\
\hline
 \end{array} $
}
\end{center}{$\phantom{|}$}
     \caption{{Convergence  of the postprocessed BDM eigenvalues $\lambda_h^\ast$ obtained using the implementation of  mixed-degree Case 1 with $\tau = 0$.
       }}
   \label{tab4f}
\end{table}

}}










\begin{thebibliography}{10}

\bibitem{Ansel71}
{\sc P.~M. Anselone}, {\em Collectively compact operator approximation theory
  and applications to integral equations}, Prentice-Hall Inc., Englewood
  Cliffs, N. J., 1971.
\newblock With an appendix by Joel Davis, Prentice-Hall Series in Automatic
  Computation.

\bibitem{AntonBuffaPerug06}
{\sc P.~F. Antonietti, A.~Buffa, and I.~Perugia}, {\em Discontinuous {G}alerkin
  approximation of the {L}aplace eigenproblem}, Comput. Methods Appl. Mech.
  Engrg., 195 (2006), pp.~3483--3503.

\bibitem{BauerFike60}
{\sc F.~L. Bauer and C.~T. Fike}, {\em Norms and exclusion theorems}, Numer.
  Math., 2 (1960), pp.~137--141.

\bibitem{BetckeTrefethen05}
{\sc T.~Betcke and L.~N.~Trefethen}, {\em Reviving the Method of Particular Solutions},
SIAM Rev., 47 (2005), pp.~469--491.

\bibitem{BoffiBrezzGasta00}
{\sc D.~Boffi, F.~Brezzi, and L.~Gastaldi}, {\em On the problem of spurious
  eigenvalues in the approximation of linear elliptic problems in mixed form},
  Math. Comp., 69 (2000), pp.~121--140.

\bibitem{BrambOsbor73}
{\sc J.~H. Bramble and J.~E. Osborn}, {\em Rate of convergence estimates for
  nonselfadjoint eigenvalue approximations}, Math. Comp., 27 (1973),
  pp.~525--549.

\bibitem{CockbDuboiGopal10}
{\sc B.~Cockburn, O.~Dubois, J.~Gopalakrishnan, and S.~Tan}, {\em Multigrid for
  an {HDG} method},
{to appear in IMA Journal of Numerical Analysis}  (2013).

\bibitem{CockbGopalLazar09}
{\sc B.~Cockburn, J.~Gopalakrishnan, and R.~Lazarov}, {\em Unified
  hybridization of discontinuous {G}alerkin, mixed, and continuous {G}alerkin
  methods for second order elliptic problems}, SIAM Journal on Numerical
  Analysis, 47 (2009), pp.~1319--1365.

\bibitem{hrt:2010}
{\sc B.~Cockburn, J.~Gopalakrishnan, F.~Li, N.-C. Nguyen, and J.~Peraire}, {\em
  Hybridization and postprocessing techniques for mixed eigenfunctions}, SIAM
  Journal on Numerical Analysis, 48 (2010), pp.~857--881.

\bibitem{CockbGopalSayas10}
{\sc B.~Cockburn, J.~Gopalakrishnan, and F.-J. Sayas}, {\em A projection-based
  error analysis of {HDG} methods}, Math. Comp., 79 (2010), pp.~1351--1367.

\bibitem{DesclNassiRappa78a}
{\sc J.~Descloux, N.~Nassif, and J.~Rappaz},
\newblock {\em On spectral approximation. {I}. {T}he problem of convergence.}
\newblock {RAIRO Anal. Num\'er.}, 12(2):97--112, iii, 1978.


\bibitem{DunfoSchwa88}
{\sc N.~Dunford and J.~T. Schwartz}, {\em Linear operators. {P}art {II}}, Wiley
  Classics Library, John Wiley \& Sons Inc., New York, 1988.
\newblock Spectral theory. Selfadjoint operators in Hilbert space, With the
  assistance of William G. Bade and Robert G. Bartle, Reprint of the 1963
  original, A Wiley-Interscience Publication.

\bibitem{GianiHall2012}
{\sc S. Giani, and E. J. C. Hall},
\newblock {\em An a posteriori error estimator for $hp$-adaptive discontinuous Galerkin methods for elliptic eigenvalue problems.}
\newblock {Mathematical Models and Methods in Applied Sciences}, 22(10):1250030, 2012.

\bibitem{Kato58}
{\sc T.~Kato}, {\em Perturbation theory for nullity, deficiency and other
  quantities of linear operators}, J. Analyse Math., 6 (1958), pp.~261--322.

\bibitem{Kato95}
\leavevmode\vrule height 2pt depth -1.6pt width 23pt, {\em Perturbation theory
  for linear operators}, Classics in Mathematics, Springer-Verlag, Berlin,
  1995.
\newblock Reprint of the 1980 edition.

\bibitem{MerciOsborRappa81}
{\sc B.~Mercier, J.~Osborn, J.~Rappaz, and P.-A. Raviart}, {\em Eigenvalue
  approximation by mixed and hybrid methods}, Math. Comp., 36 (1981),
  pp.~427--453.

\bibitem{Osbor75}
{\sc J.~E. Osborn}, {\em Spectral approximation for compact operators}, Math.
  Comput., 29 (1975), pp.~712--725.

\bibitem{Stenb91}
{\sc R.~Stenberg}, {\em Postprocessing schemes for some mixed finite elements},
  RAIRO Mod\'el. Math. Anal. Num\'er., 25 (1991), pp.~151--167.

\bibitem{Weyl12}
{\sc H.~Weyl}, {\em Das asymptotische {V}erteilungsgesetz der {E}igenwerte
  linearer partieller {D}ifferentialgleichungen (mit einer {A}nwendung auf die
  {T}heorie der {H}ohlraumstrahlung)}, Math. Ann., 71 (1912), pp.~441--479.

\end{thebibliography}

\end{document}